\documentclass[11pt,a4paper]{article}

\usepackage[utf8]{inputenc}
\usepackage[T1]{fontenc}
\usepackage{lmodern}
\usepackage[margin=1in]{geometry}
\usepackage{microtype}

\usepackage{amsmath,amssymb,amsfonts,amsthm,mathtools}
\usepackage{bm}
\usepackage{mathrsfs}
\usepackage{enumitem}
\usepackage{graphicx}
\usepackage{xcolor}
\usepackage{hyperref}
\usepackage[nameinlink,capitalize,noabbrev]{cleveref}

\hypersetup{
    colorlinks=true,
    linkcolor=blue!60!black,
    citecolor=blue!60!black,
    urlcolor=blue!60!black
}

\usepackage{aliascnt}

\theoremstyle{plain}
\newtheorem{theorem}{Theorem}[section]

\newaliascnt{proposition}{theorem}
\newtheorem{proposition}[proposition]{Proposition}
\aliascntresetthe{proposition}

\newaliascnt{lemma}{theorem}
\newtheorem{lemma}[lemma]{Lemma}
\aliascntresetthe{lemma}

\newaliascnt{corollary}{theorem}

\aliascntresetthe{corollary}

\newaliascnt{assumption}{theorem}
\newtheorem{assumption}[assumption]{Assumption}
\aliascntresetthe{assumption}

\theoremstyle{definition}
\newaliascnt{definition}{theorem}
\newtheorem{definition}[definition]{Definition}
\aliascntresetthe{definition}

\newaliascnt{remark}{theorem}
\newtheorem{remark}[remark]{Remark}
\aliascntresetthe{remark}

\newaliascnt{example}{theorem}

\aliascntresetthe{example}

\newcommand{\R}{\mathbb{R}}
\newcommand{\N}{\mathbb{N}}
\newcommand{\cH}{\mathcal{H}}
\newcommand{\cM}{\mathcal{M}}
\newcommand{\cP}{\mathcal{P}}
\newcommand{\cG}{\mathcal{G}}
\newcommand{\cA}{\mathcal{A}}
\newcommand{\cF}{\mathcal{F}}
\newcommand{\cE}{\mathcal{E}}
\newcommand{\cJ}{\mathcal{J}}

\newcommand{\supp}{\operatorname{supp}}
\newcommand{\diver}{\operatorname{div}}

\newcommand{\KL}{\operatorname{KL}}

\newcommand{\stim}{\mathrm{stim}}
\newcommand{\react}{\mathrm{react}}
\newcommand{\dyn}{\mathrm{dyn}}

\usepackage{authblk}

\title{\bf Branched Optimal Transport for Stimulus to Reaction Brain Mapping}

\author[1]{Cristian Mendico}
\affil[1]{Institut de Math\'ematique de Bourgogne - UMR 5584 CNRS, Universit\'e Bourgogne Europe \\ \href{mailto:cristian.mendico@u-bourgogne.fr}{cristian.mendico@u-bourgogne.fr}
}
\date{\today}

\begin{document}

\maketitle

\begin{abstract}
A central problem in systems neuroscience is to determine how an external stimulation is propagated through the brain so as to produce a reaction. Current deterministic and stochastic control models quantify transition costs between brain states on a prescribed network, but do not treat the transport network itself as an unknown. Here we propose a variational framework in which the inferred object is a graph/current connecting a stimulation source measure to a reaction target measure. The model is posed as an anisotropic branched optimal transport problem, where concavity of the flux cost promotes aggregation and branching. The support of an optimal current defines a stimulus-to-reaction routing architecture, interpreted as a brain reaction map. We prove existence of minimizers in discrete and continuous formulations and introduce a hybrid stochastic extension combining ramified transport with a path-space Kullback--Leibler control cost on the induced graph dynamics. This approach provides a mathematical mechanism for inferring propagation architectures rather than controlling trajectories on fixed substrates.

\medskip 

\noindent {\bf Key words:} Branched Optimal Transport ; Mathematical Biology ; Variational models.

\medskip

\noindent {\bf MSC 2020:} 35Q49 - 35Q92 - 58E25.
\end{abstract}

\tableofcontents

\section{Introduction}

A fundamental problem in systems neuroscience is to understand how an externally applied stimulus is propagated through the brain so as to produce a reaction. Depending on the experimental setting, the reaction may be motor, perceptual, autonomic, or cognitive; in all cases, however, one is faced with the same conceptual question: \emph{through which neural routing architecture is the stimulus conveyed to the reaction-producing configuration?} While modern neuroimaging and connectomics provide increasingly detailed information on structural and functional connectivity, they do not by themselves identify a mathematically canonical \emph{map of propagation} from stimulation to response. The aim of this paper is to propose such a map as the solution of a variational problem.

A large body of recent work addresses related questions through control-theoretic models of brain dynamics. In these approaches, one typically fixes an ambient dynamical system, deterministic or stochastic, on a prescribed state space or network, and studies the cost of driving the system from an initial state to a target state by means of admissible control inputs \cite{Gu2015,Kawakita2022,Kamiya2023}. This perspective has been highly successful in quantifying controllability properties of structural brain networks, in measuring the difficulty of task-dependent state transitions, and in incorporating stochasticity through entropic optimal control and Schr\"odinger bridge formulations. Mathematically, the optimization variable in those models is a control process, or equivalently a controlled trajectory or controlled path-space law. The underlying network---or state-space geometry---is fixed in advance.

More broadly, the present work fits within a long tradition of mathematical biology and mathematical neuroscience devoted to linking neural structure, collective dynamics, and large-scale communication through explicit models. At mesoscopic scales, neural mass and neural field models have long provided a mathematical language for describing spatially extended cortical activity, including traveling waves, localized bumps, oscillations, and pattern formation \cite{Breakspear2017,Bressloff2012,Pinotsis2014}. Their stochastic counterparts have also been placed on a rigorous analytical footing, with results on well-posedness, gradient-flow structure, invariant measures, and delayed or diffusive extensions that are directly relevant for coarse-grained descriptions of brain activity \cite{FaugerasInglis2015,KuehnTolle2019,SpekKuznetsovVanGils2020}, see also \cite{GalvesLoecherbachPouzat2024}. At larger scales, whole-brain models combine structural connectivity with local population dynamics and noise in order to study how anatomy constrains functional coordination, effective connectivity, and task-dependent reconfiguration \cite{Gilson2020,DecoKringelbach2020,PathakRoyBanerjee2022}. Recent work has further emphasized that realistic macroscale modeling of the brain requires integrating network topology with local biological annotations and with quantitative measures of structure--function coupling \cite{BazinetHansenMisic2023,FotiadisEtAl2024}. 

Our approach is complementary to this literature. Rather than starting from a fixed connectome and studying the resulting dynamics, we take the source--target pair and the anatomical transport cost as primary inputs, and seek the routing architecture itself as the solution of a variational problem.
 In many situations, the primary quantity of interest is not the energetic or informational cost of realizing a transition on a known substrate, but rather the \emph{substrate itself}. If one knows where the stimulus is injected, and one has a mathematical representation of the reaction-producing configuration, can one infer a preferred transport architecture connecting the two? Differently, can one formulate a model in which the output of the optimization problem is not a controlled path, but a \emph{graph}---or, in the continuous setting, a \emph{rectifiable current}---describing the routing of the stimulus through the brain? This is the perspective adopted here.

The key observation is that such a question is naturally geometric rather than purely dynamical. If information or activation propagates jointly through nearby neural pathways before branching toward different downstream regions, then the corresponding cost should not depend only on endpoint displacements or on state trajectories, but on the actual \emph{network supporting the flux}. This leads directly to the mathematical paradigm of \emph{branched optimal transport}. In branched transport, one minimizes a transportation cost that is concave with respect to the transported flux. Concavity favors aggregation: transporting mass together for part of the route is strictly cheaper than transporting it separately all the way. As a consequence, minimizers are not diffuse flows but branching structures. This phenomenon was first formalized in the discrete Gilbert model, where one minimizes over weighted directed graphs satisfying Kirchhoff balance laws \cite{Gilbert1967}. It was later extended by Xia to a continuous Eulerian formulation in terms of divergence-constrained rectifiable vector measures, or equivalently one-dimensional currents \cite{Xia2003}. These models, together with their Lagrangian traffic-plan counterparts, now form a well-established mathematical theory of ramified transport \cite{BernotCasellesMorel2009,Santambrogio2007}.

Our proposal is to adapt this framework to stimulus-to-reaction brain mapping. We represent the external stimulation by a source measure
\[
\mu_{\stim}^+ \in \mathcal M_+(\Omega),
\]
and the reaction-producing neural configuration by a target measure
\[
\mu_{\react}^- \in \mathcal M_+(\Omega),
\]
with equal total mass, on a compact domain $\Omega\subset\mathbb R^d$ representing the relevant anatomical or mesoscopic neural space. The unknown is a transport current
\[
v=\tau\theta\,\mathcal H^1\llcorner M,
\]
where $M$ is a countably $1$-rectifiable set, $\tau$ is an orientation field, and $\theta$ is a scalar multiplicity encoding transported flux. The current is constrained by the balance law
\[
\operatorname{div} v = \mu_{\stim}^+ - \mu_{\react}^-,
\]
and is selected by minimizing an anisotropic branched transport energy of the form
\[
\mathcal M_{\alpha,c}(v)
=
\int_M c(x,\tau(x))\,\theta(x)^\alpha\,d\mathcal H^1(x),
\qquad 0<\alpha<1.
\]
Here the exponent $\alpha$ encodes the ramification incentive, while the coefficient
\[
c:\Omega\times\mathbb S^{d-1}\to(0,\infty)
\]
weights transportation according to local anatomical or functional features, such as tissue geometry, preferred orientation, conductivity, or tractographic information. The support of an optimal current is then interpreted as the \emph{brain reaction map}: the preferred routing architecture that conveys the stimulation toward the reaction.

This point of view differs conceptually from standard control formulations in one essential respect. In deterministic or stochastic network control, the geometry is prescribed and the optimization determines how best to move \emph{on} that geometry. In the present framework, the geometry itself is inferred \emph{from} the source--target pair by variational selection. The current or graph is the primary optimization variable; controls and trajectories, when they appear, are secondary objects built on top of that geometry. This distinction is not merely formal. It reflects a different modeling objective: not the cost of using a known neural substrate, but the inference of an effective routing architecture from stimulation and response data.

At the same time, the branched transport picture does not aim to replace dynamical control models. Rather, it provides a geometric backbone onto which dynamical models may subsequently be imposed. This motivates the hybrid extension developed later in the paper. Once an admissible graph/current has been selected, one may associate with it a stochastic neural dynamics and then quantify the discrepancy between uncontrolled and controlled evolutions on that graph through a path-space Kullback--Leibler cost. More precisely, if $(G, w)$ denotes an admissible weighted graph and $\mathbb P^{G,w,0}$ and $\mathbb P^{G,w,u}$ denote the uncontrolled and controlled path-space laws induced by a stochastic dynamics on $G$, one may define a secondary dynamic term
\[
\cJ_{\dyn}(G,w)
:=
\inf_u
\KL\!\left(\mathbb P^{G,w,u}\,\middle\|\,\mathbb P^{G,w,0}\right).
\]
In this way, a graph may be penalized not only by its ramified geometric transport cost, but also by the minimum informational effort required to realize a compatible controlled stochastic evolution on it. The resulting model preserves the graph/current as the main unknown while incorporating a dynamic regularization principle inspired by Schr\"odinger bridge theory \cite{Kawakita2022,Kamiya2023}.


From a mathematical viewpoint, the paper develops this idea at two levels. First, we formulate discrete and continuous branched transport models adapted to the neuroscience setting and prove existence of minimizers under natural assumptions. Second, we propose a stochastic extension in which the geometric optimizer is coupled to a path-space control cost. From a modeling viewpoint, the framework offers a candidate variational definition of a brain reaction map, combining the geometric efficiency of ramified transport with the flexibility of stochastic control. We believe that this dual perspective may provide a useful bridge between geometric measure theory, optimal transport, and systems neuroscience. Numerical simulations associated with the model we are proposing in this work will appear in {\it Multimodal branched transport infers anatomically aligned brain reaction maps} by the author.

\paragraph{Structure of the paper.}

 \cref{sec:prelim} fixes the notation and the admissible data.
 In \cref{sec:estimation} we explain how these quantities may be modeled and estimated in realistic settings. \cref{sec:discrete} introduces the discrete graph formulation and proves its basic well-posedness properties. \cref{sec:continuous} develops the continuous Xia-type current formulation. \Cref{sec:hybrid} presents a stochastic extension inspired by Schr\"odinger bridge control. Finally, in \cref{sec:interpretation} we discuss the neuroscientific interpretation of the minimizer and its connection with data-driven estimation.

\section{Notations}\label{sec:prelim}


Let $\Omega \subset \R^d$ be a compact connected domain representing the brain, with $d=3$ in a volumetric model or $d=2$ in a cortical-surface model.

We denote by $\cP(\Omega)$ the space of Borel probability measures on $\Omega$ and by $\cM(\Omega;\R^d)$ the space of finite $\R^d$-valued Radon measures on $\Omega$.

\begin{definition}[Stimulus and reaction measures]
A \emph{stimulus source measure} is a nonnegative finite Borel measure
\[
\mu_{\stim}^+ \in \cM(\Omega), \qquad \mu_{\stim}^+(\Omega)=m>0,
\]
representing the entry distribution of the external stimulation. A \emph{reaction target measure} is a nonnegative finite Borel measure
\[
\mu_{\react}^- \in \cM(\Omega), \qquad \mu_{\react}^-(\Omega)=m,
\]
representing the neural distribution associated with the production of the reaction. The pair $(\mu_{\stim}^+,\mu_{\react}^-)$ is called \emph{balanced} if
\[
\mu_{\stim}^+(\Omega)=\mu_{\react}^-(\Omega).
\]
\end{definition}

The balanced condition is the natural conservation law for the transport problem. The common mass $m$ may be normalized to $1$ without loss of generality.

In the brain, the transport cost should not depend only on Euclidean length. One should also penalize propagation through regions or directions that are anatomically implausible or metabolically expensive.

\begin{definition}[Anatomical cost density]
An \emph{anatomical cost density} is a continuous function
\[
c:\Omega\times \mathbb{S}^{d-1}\to (0,\infty)
\]
such that there exist constants $0<c_*\leq c^*<\infty$ with
\[
c_* \leq c(x,\tau) \leq c^* \qquad \forall\; (x,\tau)\in \Omega\times \mathbb{S}^{d-1}.
\]
\end{definition}

The costs $c(x,\tau)$ may encode structural connectivity, tractography constraints, tissue anisotropy, or metabolic penalties.




Throughout the paper we fix
\[
\alpha \in (0,1).
\]
The concavity of $w\mapsto w^\alpha$ is what drives the branching effect: grouping two flows before they split is cheaper than transporting them independently over the same distance.

\section{Estimation of $\mu_{\stim}^{+}$, $\mu_{\react}^{-}$, and $c(x,\tau)$ from data}\label{sec:estimation}

This section explains how the abstract ingredients of the model may be estimated from empirical neuroimaging and tractography data. Our goal is not to prescribe a unique preprocessing pipeline, but rather to indicate a mathematically consistent interface between the variational formulation and realistic data analysis.

\subsection{General principle}

The source measure $\mu_{\stim}^{+}$ is meant to represent the neural entry distribution of the external stimulation, while the target measure $\mu_{\react}^{-}$ represents the spatial distribution of activity associated with the production of the reaction. In practice, both measures can be estimated either at the level of regions of interest (ROI-level atomic measures) or at the voxel/source-space level (absolutely continuous or kernel-smoothed measures).

A common template is the following. Let $(R_i)_{i=1}^N$ be a brain partition into ROIs with representative points $x_i\in\Omega$, and let $a_i^{\stim},a_i^{\react}\ge 0$ be nonnegative activity scores extracted from early and late temporal windows, respectively. Then one sets
\[
\mu_{\stim}^{+}
=
m\sum_{i=1}^N
\frac{a_i^{\stim}}{\sum_{j=1}^N a_j^{\stim}}\,
\delta_{x_i},
\qquad
\mu_{\react}^{-}
=
m\sum_{i=1}^N
\frac{a_i^{\react}}{\sum_{j=1}^N a_j^{\react}}\,
\delta_{x_i},
\]
provided the denominators are nonzero. If a continuous representation is preferred, one replaces each Dirac mass $\delta_{x_i}$ by a smooth kernel $\varphi_i\ge 0$ supported in $R_i$ and normalized by $\int_\Omega \varphi_i(x)\,dx=1$.

The anatomical cost density $c(x,\tau)$ should encode the local ease or difficulty of information propagation. In applications, it may be estimated from diffusion MRI, tractography, structural connectivity, or other anatomical priors.

This reduction of neuroimaging data to source and target measures is consistent with the standard interface used in large-scale brain modeling, where parcellated regional activity and structural priors provide the natural bridge between empirical measurements and mesoscale mathematical models \cite{Gilson2020,Patow2024}. In the same spirit, recent work on biologically annotated connectomes and macroscale structure--function coupling emphasizes that regional summaries should be interpreted jointly with anatomical and dynamical constraints, rather than as purely descriptive observables \cite{BazinetHansenMisic2023,FotiadisEtAl2024}.

\subsection{Estimation from fMRI and EEG/MEG data}

For fMRI data, let $y_i(t)$ denote the preprocessed BOLD signal in ROI $R_i$, and let
\[
\widetilde y_i(t)=y_i(t)-\bar y_i^{\,\mathrm{base}}
\]
be the baseline-corrected signal.

In task-based fMRI, task-evoked responses are routinely summarized through BOLD contrasts and generalized-linear-model-based statistics, which provide a natural source of nonnegative regional scores for the construction of $\mu_{\stim}^{+}$ and $\mu_{\react}^{-}$ \cite{Friston2009,EstebanEtAl2020}. More generally, task-based fMRI is explicitly designed to map the neural response to cognitive, perceptual, or motor manipulations through BOLD fluctuations, making early and late activation windows a natural proxy for stimulus-entry and reaction-related activity, respectively \cite{EstebanEtAl2020}.

Early stimulus-related scores may then be defined by
\[
a_i^{\stim}
=
\int_{t_0}^{t_1}\bigl(\widetilde y_i(t)\bigr)_+\,dt,
\]
or, more generally, by a positive task-evoked contrast such as a GLM coefficient or a $t$-statistic. Likewise, late reaction-related scores may be defined by
\[
a_i^{\react}
=
\int_{t_2}^{t_3}\bigl(\widetilde y_i(t)\bigr)_+\,dt.
\]
These quantities are then inserted into the general normalization formula above.

For EEG or MEG data, after source localization one obtains regional source time series $u_i(t)$. In this case, the same construction can be applied with
\[
a_i^{\stim}
=
\int_{t_0}^{t_1}|u_i(t)|\,dt,
\qquad
a_i^{\react}
=
\int_{t_2}^{t_3}|u_i(t)|\,dt,
\]
or with signed variants if one wishes to distinguish excitatory increases from other effects.

For EEG and MEG, source reconstruction provides spatially resolved time series associated with cortical or subcortical generators, thereby offering a natural route to define regional activity scores at much finer temporal resolution than fMRI \cite{KnoscheHaueisen2022,AsadzadehEtAl2020}. This high temporal resolution is particularly advantageous when one aims to distinguish early stimulus-driven components from later reaction-related components, which is precisely the temporal separation required by the present source--target construction \cite{KnoscheHaueisen2022}.

\begin{remark}
If deactivations are considered relevant, one may either treat positive and negative parts separately or introduce multi-source/multi-target signed decompositions. In the present paper we restrict attention to positive measures, which is the natural setting for branched transport.
\end{remark}

\subsection{Estimation of the anatomical cost density}

We now indicate several possible constructions of the anatomical cost density $c(x,\tau)$.

The use of diffusion MRI and tractography to construct anatomical transport costs is consistent with a broad connectomic literature in which white-matter architecture is used to estimate structural connectivity and to constrain large-scale communication models \cite{HagmannEtAl2007,ZhangEtAl2022,YehEtAl2021}. In particular, both tract-specific and connectome-based analyses routinely extract quantitative information from tractography that can be incorporated into spatially heterogeneous or direction-dependent weights, exactly of the kind encoded here by $c(x,\tau)$ \cite{ZhangEtAl2022,YehEtAl2021}. More generally, recent work on biologically annotated connectomes and structure--function coupling suggests that macroscale brain models should integrate wiring information with local biological or geometric annotations, which further supports the use of anisotropic and biologically informed transport costs in the present framework \cite{BazinetHansenMisic2023,FotiadisEtAl2024}.

A simple isotropic heterogeneous model is obtained from a smooth white-matter score $w:\Omega\to[0,1]$ by setting
\[
c(x,\tau)
=
c_{\min}+(c_{\max}-c_{\min})(1-w(x)),
\qquad
0<c_{\min}<c_{\max},
\]
so that propagation through white matter is cheaper than propagation through less favorable regions.

A direction-dependent model can be derived from a continuous field of symmetric nonnegative matrices $D(x)\in\R^{d\times d}$, for instance estimated from diffusion MRI or tractography, by defining
\[
c(x,\tau)
=
\sqrt{\tau^\top\bigl(D(x)+\varepsilon I\bigr)^{-1}\tau},
\qquad
\varepsilon>0.
\]
This makes transport cheaper along directions aligned with strong local diffusion or fiber orientation.

A more flexible mixed model is
\[
c(x,\tau)=a(x)\sqrt{\tau^\top A(x)\tau}+b(x),
\]
where $a,b:\Omega\to(0,\infty)$ are continuous and bounded above and below by positive constants, and $A(x)$ is a uniformly positive definite symmetric matrix field.

Finally, if one has a continuous directional plausibility score
\[
p:\Omega\times\mathbb S^{d-1}\to[0,1],
\]
one may set
\[
c(x,\tau)=c_*+(c^*-c_*)\bigl(1-p(x,\tau)\bigr),
\]
so that more plausible propagation directions carry lower transport cost.

\begin{remark}
The regularization term $\varepsilon I$ in tensor-based models guarantees uniform positive definiteness and therefore ensures the continuity and positive upper/lower bounds required in the definition of anatomical cost density.
\end{remark}

\section{Discrete graph model}\label{sec:discrete}

\subsection{Weighted directed graphs}

We first formulate a discrete model, since it directly returns a graph and is particularly convenient for computational implementations.

\begin{definition}[Embedded weighted directed graph]
An \emph{embedded weighted directed graph} in $\Omega$ is a quadruple
\[
G=(V,E,w)
\]
with the following components:
\begin{enumerate}[label=($\roman*$)]
    \item $V=\{v_1,\dots,v_N\}\subset \Omega$ is a finite set of vertices;
    \item $E$ is a finite set of oriented edges;
    \item for each $e\in E$, the edge is represented by an injective Lipschitz path
    \[
    \gamma_e:[0,\ell_e]\to \Omega,
    \]
    with $\gamma_e(0)=v_{e^-}$ and $\gamma_e(\ell_e)=v_{e^+}$, where $e^- ,e^+\in \{1,\dots,N\}$ denote the tail and head of the edge;
    \item $w=(w_e)_{e\in E}$ with $w_e\geq 0$ is the family of transported fluxes.
\end{enumerate}
\end{definition}

Vertices may represent observed ROIs, latent relay regions, or free Steiner-type branching points.

\begin{definition}
For each edge $e\in E$, define the anatomical-geometric cost
\[
\beta_e := \int_0^{\ell_e} c\!\left(\gamma_e(s),\frac{\dot\gamma_e(s)}{|\dot\gamma_e(s)|}\right)\, ds,
\]
whenever $\dot\gamma_e(s)\neq 0$ almost everywhere.
\end{definition}

Let $b:V\to \R$ be a prescribed vertex supply/demand function satisfying
\[
\sum_{v\in V} b(v)=0.
\]
Heuristically, we interpret:
\begin{enumerate}
    \item $b(v)>0$: stimulus source at $v$;
    \item $b(v)<0$: reaction sink at $v$;
    \item $b(v)=0$: pure relay/branching vertex.
\end{enumerate}

\begin{definition}
A weighted directed graph $(G,w)$ is \emph{Kirchhoff-admissible} with respect to $b$ if for every vertex $v\in V$,
\[
\sum_{e\in \mathrm{Out}(v)} w_e - \sum_{e\in \mathrm{In}(v)} w_e = b(v).
\]
\end{definition}

This is the discrete mass-conservation law.

\begin{definition}
Given $\alpha\in(0,1)$, the \emph{discrete branched brain transport energy} of an admissible weighted graph $(G,w)$ is
\[
\cE_\alpha(G,w)
:=
\sum_{e\in E} \beta_e\, w_e^\alpha.
\]
\end{definition}

The problem of interest is therefore:
\begin{equation}\label{eq:discrete-problem}
\min \left\{ \cE_\alpha(G,w) : (G,w)\in \cG_{\mathrm{adm}} \right\},
\end{equation}
where $\cG_{\mathrm{adm}}$ is a prescribed admissible family of embedded weighted directed graphs satisfying the Kirchhoff constraints and the source/sink specification induced by $(\mu_{\stim}^+,\mu_{\react}^-)$.

\begin{remark}
The support of a minimizer $(G^*,w^*)$ is interpreted as the \emph{brain reaction map}. The edge multiplicities $w_e^*$ quantify the amount of routed information (or effective signal load), while the branching structure indicates shared pathways and integrative hubs.
\end{remark}

\subsection{Finite-library existence theorem}

The broadest version of the discrete problem involves optimization over both graph topology and geometry. We first consider the important case in which one starts from a finite candidate library of possible edges.

\begin{assumption}[Finite admissible library]
There is a finite directed graph
\[
G_0=(V_0,E_0)
\]
embedded in $\Omega$, and $\cG_{\mathrm{adm}}$ consists of all weighted directed subgraphs of $G_0$ satisfying the Kirchhoff balance law for a prescribed $b:V_0\to \R$.
\end{assumption}

\begin{theorem}\label{thm:discrete-existence}
Under the finite-library assumption, the optimization problem \eqref{eq:discrete-problem} admits at least one minimizer.
\end{theorem}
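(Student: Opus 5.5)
We reduce the problem to minimizing a continuous function over a compact subset of a finite-dimensional space. Since $G_0=(V_0,E_0)$ is finite, every admissible weighted subgraph is encoded by a vector $w=(w_e)_{e\in E_0}\in[0,\infty)^{E_0}$, with $w_e=0$ for edges not selected. We adopt the convention that a subgraph contributes only its edges with positive weight, or equivalently that zero-weight edges cost $\beta_e\cdot 0^\alpha=0$. Then \eqref{eq:discrete-problem} becomes
\[
\min\Bigl\{ F(w):=\sum_{e\in E_0}\beta_e w_e^\alpha \;:\; w\in K\Bigr\},
\qquad
K:=\Bigl\{w\in[0,\infty)^{E_0}:\ \sum_{e\in \mathrm{Out}(v)}w_e-\sum_{e\in\mathrm{In}(v)}w_e=b(v)\ \ \forall v\in V_0\Bigr\}.
\]
Each $\beta_e$ is a fixed finite number: $c$ is continuous and bounded by $c^*$, and $\gamma_e$ is Lipschitz on $[0,\ell_e]$, so $\beta_e\le c^*\ell_e<\infty$. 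The map $F$ is continuous on $[0,\infty)^{E_0}$ because $t\mapsto t^\alpha$ is continuous at $0$. The set $K$ is a closed polyhedron, being an intersection of finitely many closed half-spaces and hyperplanes. We assume $K\neq\emptyset$; otherwise the problem is vacuous.

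\textbf{Main obstacle.} $K$ need not be bounded if $G_0$ contains directed cycles, since flux can circulate freely. We handle this by a cycle-cancellation reduction. Given $w\in K$, suppose the support of $w$ contains a directed cycle $C$. Subtract $\lambda=\min_{e\in C}w_e$ along $C$. This preserves Kirchhoff balance and nonnegativity, kills at least one edge, and does not increase $F$, since each $w_e^\alpha$ is nondecreasing in $w_e$. (If $G_0$ contains both orientations of an edge, the same argument applies to the corresponding 2-cycles.) Iterating finitely many times yields an acyclic $\tilde w\in K$ with $F(\tilde w)\le F(w)$.

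Next we bound acyclic flows. By the standard flow decomposition of acyclic flows into source-to-sink paths, every edge value satisfies
\[
\tilde w_e\le \sum_{v:\,b(v)>0} b(v)=:m.
\]
So it suffices to minimize $F$ over $K_m:=K\cap[0,m]^{E_0}$, which is compact and nonempty. If one prefers to avoid path decompositions, a second option works directly: decompose $w$ via the conformal decomposition into paths and cycles, and drop the cycles.

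\textbf{Conclusion.} By Weierstrass, $F$ attains its minimum on $K_m$ at some $w^*$. For any $w\in K$, the reduction above gives $F(w)\ge F(\tilde w)\ge F(w^*)$, so $w^*$ minimizes over all of $K$. The corresponding subgraph $G^*=\{e\in E_0: w^*_e>0\}$ with weights $w^*$ is the desired minimizer of \eqref{eq:discrete-problem}.

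Note that $F$ is not needed to be coercive. Concavity actually implies the minimum is attained at a vertex of the polytope $K_m$, but we do not use this.
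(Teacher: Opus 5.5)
Your proof is correct, and it departs from the paper's exactly at the compactness step. The paper claims that the full admissible set $\{w\ge 0: Aw=b\}$ is bounded because the supplies are fixed, and then applies Weierstrass to it directly. As you point out, this fails as soon as $G_0$ contains a directed cycle $C$, since then $w+t\,\mathbf{1}_C$ is admissible for every $t\ge 0$.

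Your cycle-cancellation step is a non-strict version of \cref{prop:cycle}, needing only $\beta_e\ge 0$ rather than $\beta_e>0$. Combined with the bound $w_e\le m$ for acyclic flows, it reduces the problem to the compact polytope $K_m$ at no cost in energy, so Weierstrass applies legitimately. The paper's argument as written is valid only for acyclic libraries. There its boundedness claim does hold, with $C(b,G_0)=|E_0|\,m$ by the same bound.

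Your ``second option'' still uses a decomposition. If you want $w_e\le m$ with no decomposition at all, a cut argument suffices. Write $e$ from $x$ to $y$, and let $U$ be the set of vertices from which $x$ can be reached along edges of positive flux. Acyclicity gives $y\notin U$, and no positive-flux edge enters $U$. Summing Kirchhoff's law over $U$ then gives $w_e\le\sum_{v\in U}b(v)\le m$.

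What the paper's formulation would buy, if it held, is compactness of the whole admissible set, which it reuses in \cref{thm:hybrid-existence}. Your reduction does not transfer there directly. Removing a circulation changes $L_{G,w}$ and $C_{G,w}$, so it need not decrease $\mathcal{J}_{\dyn}$. In that setting one would instead use $\mathcal{F}_\lambda\ge\mathcal{E}_\alpha\ge\min_e\beta_e\sum_e w_e^\alpha$, which holds since $\mathcal{J}_{\dyn}\ge 0$. This has bounded sublevel sets when all $\beta_e>0$.
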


\begin{proof}
Since the candidate edge set $E_0$ is finite, an admissible weighted subgraph is completely determined by a nonnegative vector
\[
w=(w_e)_{e\in E_0}\in [0,\infty)^{|E_0|},
\]
with the convention that edges with $w_e=0$ are inactive.

The Kirchhoff constraints are linear:
\[
A w = b,
\]
for a suitable incidence matrix $A$. Hence the admissible set
\[
\cA:=\{w\in [0,\infty)^{|E_0|}: Aw=b\}
\]
is closed and convex.

Because the total outgoing mass from source vertices is fixed, every admissible $w$ has uniformly bounded $\ell^1$ norm. Indeed, summing the positive supplies gives
\[
\sum_{e\in E_0} w_e \leq C(b,G_0),
\]
for some constant depending only on the graph incidence structure and the source/sink vector $b$. Hence $\cA$ is bounded. Since it is also closed in a finite-dimensional space, it is compact.

The objective
\[
w \mapsto \sum_{e\in E_0} \beta_e w_e^\alpha
\]
is continuous on $[0,\infty)^{|E_0|}$. Therefore, by the Weierstrass theorem, it attains its minimum on $\cA$.
\end{proof}

\begin{proposition}\label{prop:cycle}
Assume that $\beta_e>0$ for every edge $e$. Then every minimizer of \eqref{eq:discrete-problem} can be chosen cycle-free.
\end{proposition}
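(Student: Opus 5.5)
Given a minimizer $w^*$ from \cref{thm:discrete-existence}, we remove directed cycles from its support one at a time by cancelling flow along them. Each removal keeps the weights admissible and does not increase the cost, and the process terminates after finitely many steps.

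\emph{Step 1: one reduction.} Suppose the support $\{e: w^*_e>0\}$ contains a directed cycle $C=(e_1,\dots,e_k)$, meaning the head of each $e_i$ is the tail of $e_{i+1}$, cyclically. Set $t:=\min_{e\in C} w^*_e>0$ and define $\tilde w_e:=w^*_e-t$ for $e\in C$ and $\tilde w_e:=w^*_e$ otherwise. Every vertex on $C$ loses exactly $t$ of outgoing flow and $t$ of incoming flow, so the Kirchhoff constraint $A\tilde w=b$ still holds. Also $\tilde w\ge 0$, so $\tilde w\in\cA$. Since $s\mapsto s^\alpha$ is strictly increasing on $[0,\infty)$ and $\beta_e>0$, we get
\[
\cE_\alpha(\tilde w)-\cE_\alpha(w^*)=\sum_{e\in C}\beta_e\bigl((w^*_e-t)^\alpha-(w^*_e)^\alpha\bigr)<0 .
\]
This contradicts minimality. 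Hence no minimizer has a directed cycle in its support, which is stronger than the statement.

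\emph{Step 2: undirected cycles.} If "cycle-free" is read as acyclic in the undirected sense, a further argument is needed. Let $C$ be an undirected cycle in the support, and orient it. Push $\pm s$ around $C$: add $s$ to edges traversed forwards and subtract $s$ from edges traversed backwards. Kirchhoff is preserved, and the perturbation stays in $\cA$ for $s$ in an interval $[s_-,s_+]$ with $s_-<0<s_+$. At each endpoint of this interval, some edge of $C$ has weight $0$.

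The map $s\mapsto\sum_{e\in C}\beta_e(w_e\pm s)^\alpha$ is concave on this interval, because each term is concave. A concave function on an interval attains its minimum at an endpoint. So one of the two endpoint perturbations has cost $\le\cE_\alpha(w^*)$, hence is again a minimizer, and it has strictly fewer support edges.

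\emph{Step 3: termination.} Each application of Step 2 strictly decreases the number of support edges, which is at most $|E_0|$. Starting from the minimizer given by \cref{thm:discrete-existence} and iterating, we therefore reach, after finitely many steps, a minimizer whose support contains no cycle.

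The step needing most care is Step 2. One must check that the interval $[s_-,s_+]$ is nondegenerate, which holds because every edge of $C$ has positive weight. One must also use concavity rather than strict monotonicity, since here some weights increase while others decrease. This step is exactly where the hypothesis $0<\alpha<1$ enters.
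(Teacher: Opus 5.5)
Your Step~1 is exactly the paper's proof. The paper reads ``cycle-free'' as ``no directed cycle carrying positive flow'', subtracts $\min_{e\in C} w_e$ around such a cycle, uses only that $t\mapsto t^\alpha$ is strictly increasing and $\beta_e>0$, and stops there. Your Steps~2--3 prove something genuinely stronger, of Gilbert type: a minimizer can be chosen whose support has no cycle even in the undirected sense. This is also the only place where $\alpha<1$ really matters. The paper's argument would work unchanged for $\alpha\ge 1$. The forest property needs concavity, because pushing flow around an undirected cycle raises some weights and lowers others. So the paper's route buys brevity and fits the weaker directed reading of the claim. Yours delivers the tree structure that the subsequent remark only alludes to.

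Two refinements.

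\emph{Finiteness of the interval.} In Step~2, both $s_-$ and $s_+$ being finite is not automatic: if $C$ were a directed cycle, one endpoint would be infinite. Step~1 is what rules this out. It shows that no minimizer has a directed cycle in its support, so every undirected cycle you meet has edges traversed in both orientations. You should say this explicitly.

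\emph{The iteration is unnecessary.} For $0<\alpha<1$, the map $t\mapsto t^\alpha$ is strictly concave on $[0,\infty)$. Hence each $s\mapsto\beta_e(w_e\pm s)^\alpha$ is \emph{strictly} concave on $[s_-,s_+]$. Since $0$ is an interior point, the energy at $s=0$ strictly exceeds the smaller of the two endpoint values. That contradicts minimality directly. It shows that \emph{every} minimizer has forest support, not merely that one can be chosen.
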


\begin{proof}
Suppose that an admissible minimizer contains a directed cycle $C$ carrying strictly positive flow. Let
\[
\varepsilon := \min_{e\in C} w_e >0.
\]
Reduce the flow by $\varepsilon$ on every edge of the cycle, leaving all other edge weights unchanged. The Kirchhoff constraints remain satisfied, since every vertex on the cycle loses the same incoming and outgoing amount.

Because $t\mapsto t^\alpha$ is strictly increasing on $(0,\infty)$ and each $\beta_e>0$, the total energy strictly decreases:
\[
\sum_{e\in C}\beta_e\bigl((w_e-\varepsilon)^\alpha - w_e^\alpha\bigr)<0.
\]
This contradicts minimality. Hence a minimizer may always be chosen without cycles.
\end{proof}

\begin{remark}
\Cref{prop:cycle} shows that the optimal graph has a tree-like flavor, as expected in branched transportation. In the present application, such a cycle-free minimizer is naturally interpreted as a preferred routing tree for the stimulus-to-reaction propagation.
\end{remark}

\section{Continuous current formulation}\label{sec:continuous}

The discrete model is often the most intuitive. However, when one wishes to optimize not only weights but also the geometric placement of branches and paths, it is more natural to pass to a continuous Eulerian formulation.

\subsection{Rectifiable transport currents}

\begin{definition}
A vector measure $v\in \cM(\Omega;\R^d)$ is called \emph{rectifiable} if there exist:
\begin{enumerate}[label=($\roman*$)]
    \item a countably $1$-rectifiable set $M\subset \Omega$,
    \item a measurable unit tangent field $\tau:M\to \mathbb{S}^{d-1}$,
    \item a multiplicity function $\theta:M\to [0,\infty)$ integrable with respect to $\cH^1\llcorner M$,
\end{enumerate}
such that
\[
v = \tau\,\theta\, \cH^1\llcorner M.
\]
Here $M$ is the unknown transport network, $\tau$ its orientation, and $\theta$ the flow multiplicity.
\end{definition}

\begin{definition}
Given balanced measures $(\mu_{\stim}^+,\mu_{\react}^-)$, an admissible current is a rectifiable vector measure $v$ satisfying
\[
\diver v = \mu_{\stim}^+ - \mu_{\react}^-
\]
in the sense of distributions on $\Omega$.
\end{definition}

\begin{definition}
Let $\Omega\subset\R^d$ be compact. Given balanced finite nonnegative measures
\[
\mu_{\stim}^+,\mu_{\react}^-\in \mathcal M(\Omega),
\qquad
\mu_{\stim}^+(\Omega)=\mu_{\react}^-(\Omega)=:m,
\]
we denote by $\mathcal A(\mu_{\stim}^+,\mu_{\react}^-)$ the class of acyclic rectifiable vector measures
\[
v=\tau\theta\,\mathcal H^1\llcorner M\in\mathcal M(\Omega;\R^d)
\]
such that
\[
\operatorname{div} v=\mu_{\stim}^+-\mu_{\react}^-.
\]
\end{definition}

\subsection{Continuous branched transport functional}

For a rectifiable current
\[
v = \tau\theta\,\cH^1\llcorner M,
\]
define
\[
\cM_{\alpha,c}(v)
:=
\int_M c(x,\tau(x))\,\theta(x)^\alpha\, d\cH^1(x),
\]
and set $\cM_{\alpha,c}(v)=+\infty$ if $v$ is not rectifiable.

The continuous brain mapping problem is:
\begin{equation}\label{eq:continuous-problem}
\min \left\{
\cM_{\alpha,c}(v):
v\in \cM(\Omega;\R^d),\ \diver v=\mu_{\stim}^+-\mu_{\react}^-
\right\}.
\end{equation}

\begin{remark}
The unknown support $M^*=\supp(v^*)$ of a minimizer $v^*$ is the continuous analogue of the optimal transport graph. It is the inferred brain routing architecture. The multiplicity $\theta^*$ quantifies the amount of information routed through each point of the network.
\end{remark}

\begin{lemma}\label{lem:continuous-lsc}
Let $\alpha\in(1-\frac1d,1)$. Assume that $\Omega\subset\R^d$ is compact and that
\[
c:\Omega\times\mathbb S^{d-1}\to(0,\infty)
\]
is continuous and satisfies
\[
0<c_*\le c(x,\tau)\le c^*<\infty
\qquad
\forall (x,\tau)\in\Omega\times\mathbb S^{d-1}.
\]
Let $(v_n)_n\subset \cA(\mu_{\stim}^{+},\mu_{\react}^{-})$ be a sequence such that
\[
\sup_n \cM_{\alpha,c}(v_n)<\infty
\]
and
\[
v_n \overset{*}{\rightharpoonup} v
\qquad\text{weakly-* in }\cM(\Omega;\R^d).
\]
Then
\[
v\in \cA(\mu_{\stim}^{+},\mu_{\react}^{-})
\]
and
\[
\cM_{\alpha,c}(v)\le \liminf_{n\to\infty}\cM_{\alpha,c}(v_n).
\]
\end{lemma}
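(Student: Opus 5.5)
The plan is to split the statement into three parts: the divergence constraint, lower semicontinuity, and rectifiability/acyclicity of the limit. The second part will be deduced from the known isotropic Xia theory via a localization argument that uses the continuity of $c$.

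\textbf{Divergence constraint.} This passes to the limit directly. For every $\varphi\in C^1(\Omega)$,
\[
-\int\nabla\varphi\cdot dv_n=\int\varphi\,d(\mu_{\stim}^+-\mu_{\react}^-),
\]
and weak-* convergence lets us pass to the limit in the left-hand side. The bound $c\ge c_*$ gives
\[
\sup_n \cM_{\alpha}(v_n)\le c_*^{-1}\sup_n\cM_{\alpha,c}(v_n)<\infty,
\]
where $\cM_\alpha$ is the isotropic Xia mass.

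\textbf{Rectifiability of the limit.} For $\alpha>1-\frac1d$, Xia's theory applies: the isotropic mass $\cM_\alpha$ is lower semicontinuous under weak-* convergence of rectifiable measures with uniformly bounded mass and divergence \cite{Xia2003}. We take this as known and also use the standard fact that the limit is rectifiable. This fact follows from the closure theorem for normal/flat currents with finite $\alpha$-mass, i.e.\ the White--Xia rectifiability via flat-norm convergence on compact sets. We would first check that weak-* convergence with bounded mass and fixed boundary implies flat convergence; the uniform bound on $|v_n|(\Omega)$ is needed here. Acyclicity of the limit will be handled at the end.

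\textbf{Lower semicontinuity with the anisotropic weight.} This is the main obstacle, since $\cM_{\alpha,c}$ is not simply $\cM_\alpha$ times a constant. I would first prove the inequality when $c$ is piecewise constant in $x$ and independent of $\tau$. Partition $\Omega$ into small cubes $Q_j$ with $v(\partial Q_j)$-negligible boundaries. Localized Xia lower semicontinuity on open sets $\mathrm{int}\,Q_j$ is available, since the $\alpha$-mass is a limit of flat-norm-lsc local functionals, and summing gives the result.

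The direction dependence is then handled by a blow-up argument. Write $\cM_{\alpha,c}(v)=\int\theta^\alpha\,d\lambda$ with $\lambda=c(x,\tau)\cH^1\llcorner M$. Let $\sigma$ be a weak-* limit of the measures $\sigma_n=c(x,\tau_n)\theta_n^\alpha\cH^1\llcorner M_n$, which exists after passing to a subsequence realizing the liminf. It then suffices to show
\[
\frac{d\sigma}{d\cH^1\llcorner M}(x_0)\ge c(x_0,\tau(x_0))\,\theta(x_0)^\alpha
\qquad\text{for }\cH^1\text{-a.e. }x_0\in M.
\]
Blowing up at such an $x_0$, the rescaled $v$ converges to $\theta(x_0)\tau(x_0)\cH^1\llcorner$(line). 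By continuity of $c$, the cost near $x_0$ is close to the frozen anisotropic cost $c(x_0,\cdot)$.

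For a frozen anisotropic cost and a straight-line target, I would argue that the approximating currents must transport a net flux $\theta(x_0)$ across the hyperplane orthogonal to $\tau(x_0)$. Subadditivity of $t\mapsto t^\alpha$ gives $\sum\theta_i^\alpha\ge(\sum\theta_i)^\alpha$ on each slice. The remaining difficulty is that the directions $\tau_n$ may deviate from $\tau(x_0)$. I would control this using
\[
c(x_0,\tau_n)\,|\langle\tau_n,\tau(x_0)\rangle|^{-1}
\]
via slicing, together with a convexity property of $c(x_0,\cdot)$ in the one-homogeneous sense. If that fails, I would fall back on approximating $c$ from below by finite infima of affine-in-direction, piecewise-constant-in-$x$ costs. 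Finally, the limit is made acyclic by Xia's cycle removal, which does not increase the cost; this is analogous to \cref{prop:cycle}.
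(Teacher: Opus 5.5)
Your divergence and rectifiability steps are sound. They are an Eulerian counterpart of the paper's argument: the paper passes to good irrigation plans, bounds average length via $\Theta_{\eta_n}\le m$, uses compactness of traffic plans, and gets rectifiability from finite Lagrangian energy, where you use White--Xia closure. Your cube localization also correctly handles the $x$-dependence of $c$.

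The genuine gap is the direction dependence, and it cannot be closed under the stated hypotheses, because the inequality is false there. Take $d=2$, $\mu_{\stim}^+=\delta_{(0,0)}$, $\mu_{\react}^-=\delta_{(1,0)}$ and $c(x,\tau)=1+9(\tau_1^2-\tau_2^2)^2\in[1,10]$. Let $v_n$ be the unit-multiplicity current carried by the zigzag over $[0,1]$ with slopes $\pm1$ and amplitude $1/(2n)$. Each $v_n$ is a simple arc, hence acyclic, with $\cM_{\alpha,c}(v_n)=\sqrt2$. Yet $v_n\overset{*}{\rightharpoonup}e_1\,\cH^1\llcorner([0,1]\times\{0\})$, whose energy is $10$.

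The missing ingredient is an ellipticity hypothesis: convexity of $\phi_x(z):=|z|\,c(x,z/|z|)$ for every $x$. This is the ``convexity property'' you invoke, but it is not in the statement. Your fallback does not substitute for it: finite infima of affine functions of the direction extend to $z\mapsto\min_k(\langle a_k,z\rangle+b_k|z|)$, which is again non-convex.

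Even with convexity, your slicing is in the wrong direction. The slice-wise bound you describe needs $c(x_0,\tau)\ge c(x_0,\tau(x_0))\,|\langle\tau,\tau(x_0)\rangle|$, i.e.\ that $c(x_0,\tau(x_0))\,\tau(x_0)$ be a subgradient of $\phi_{x_0}$ at $\tau(x_0)$. This fails for tilted elliptic costs. Slice instead along the level sets of $y\mapsto\langle\xi,y\rangle$, with $\xi$ a subgradient of $\phi_{x_0}$ at $\tau(x_0)$. Then:
\begin{itemize}
\item $c(x_0,\tau_n)\ge\max\{\langle\xi,\tau_n\rangle,0\}$;
\item the coarea formula and subadditivity on the forward crossings give asymptotically at least $\theta(x_0)^\alpha$ per unit of $t$;
\item $t$ sweeps an interval of length $r\,c(x_0,\tau(x_0))$ along a blow-up segment of length $r$.
\end{itemize}
The paper's proof has the same hole. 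It asserts that the Krukowski--Marchese argument carries over to the anisotropic weight, but the plans $\eta_n=\delta_{\gamma_n}$ on the zigzag curves converge narrowly to the straight curve while the Lagrangian energy jumps from $\sqrt2$ to $10$.

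The second gap concerns the conclusion $v\in\cA(\mu_{\stim}^+,\mu_{\react}^-)$, which requires the limit $v$ \emph{itself} to be acyclic. Removing cycles replaces $v$ by a different current $v'$. So your last step shows only that some acyclic admissible $v'$ satisfies $\cM_{\alpha,c}(v')\le\cM_{\alpha,c}(v)$.

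Moreover, $v$ need not be acyclic, even for constant $c$. Consider the simple arcs that run from $a=(2,0)$ to $p=(1,0)$, then counterclockwise along the unit circle up to angle $2\pi-1/n$, then straight to $b=(0,0)$. They are acyclic with uniformly bounded energy, but converge weak-* to a current containing the whole circle as a cycle. The paper's proof also just asserts $v=v_\eta\in\cA$ without checking this.

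What can actually be proved is that $v$ is rectifiable with $\operatorname{div}v=\mu_{\stim}^+-\mu_{\react}^-$, and, when each $\phi_x$ is convex, $\cM_{\alpha,c}(v)\le\liminf_n\cM_{\alpha,c}(v_n)$. Your cycle-removal step is then precisely what Theorem~\ref{thm:continuous-existence} needs to produce an acyclic minimizer.
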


\begin{proof}
The argument relies on the standard Eulerian--Lagrangian correspondence for branched transport, compactness of traffic plans, and lower semicontinuity of the corresponding Lagrangian $H$-mass.

Since
\[
v_n \overset{*}{\rightharpoonup} v
\qquad\text{in }\cM(\Omega;\R^d)
\]
and
\[
\diver v_n=\mu_{\stim}^{+}-\mu_{\react}^{-}
\qquad\forall\; n \in \N,
\]
the divergence constraint passes to the limit. Indeed, for every $\varphi\in C^1(\Omega)$,
\[
\int_\Omega \nabla\varphi\cdot dv
=
\lim_{n\to\infty}\int_\Omega \nabla\varphi\cdot dv_n
=
-\lim_{n\to\infty}\int_\Omega \varphi\,d(\mu_{\stim}^{+}-\mu_{\react}^{-})
=
-\int_\Omega \varphi\,d(\mu_{\stim}^{+}-\mu_{\react}^{-}),
\]
hence
\[
\diver v=\mu_{\stim}^{+}-\mu_{\react}^{-}.
\]

Moreover, since $c\ge c_*$, the sequence has uniformly bounded standard branched transport mass:
\[
c_*\int_{M_n}\theta_n^\alpha\,d\cH^1
\le
\int_{M_n} c(x,\tau_n(x))\,\theta_n^\alpha\,d\cH^1
=
\cM_{\alpha,c}(v_n),
\]
and therefore
\[
\sup_n \int_{M_n}\theta_n^\alpha\,d\cH^1<\infty.
\]

At this point we pass from the Eulerian formulation to the Lagrangian one. Since each $v_n$ belongs to $\cA(\mu_{\stim}^{+},\mu_{\react}^{-})$, it is in particular acyclic. By the standard correspondence between acyclic transport currents and good irrigation plans, there exists for every $n$ an irrigation plan $\eta_n$ whose associated current is exactly $v_n$ and such that the corresponding energy formula holds:
\[
\cM_{\alpha,c}(v_n)=\mathbf I_{\alpha,c}(\eta_n).
\]
Here
\[
\mathbf I_{\alpha,c}(\eta)
:=
\int_{\Gamma}\int_0^{+\infty}
c\!\left(\gamma(t),\frac{\dot\gamma(t)}{|\dot\gamma(t)|}\right)
|\dot\gamma(t)|\,\Theta_\eta(\gamma(t))^{\alpha-1}\,dt\,d\eta(\gamma),
\]
where $\Gamma$ denotes the space of 1-Lipschitz eventually constant curves in $\Omega$, and $\Theta_\eta(x)$ is the traffic multiplicity at $x$.
The equivalence between the Eulerian and Lagrangian formulations of branched transport, together with the corresponding energy identity, is standard; see \cite{BernotCasellesMorel2009,Pegon2017}.

Let
\[
m:=\mu_{\stim}^{+}(\Omega)=\mu_{\react}^{-}(\Omega).
\]
Since the multiplicity of a traffic plan is bounded above by the total transported mass, one has
\[
0\le \Theta_{\eta_n}(x)\le m
\qquad\forall x\in\Omega.
\]
Because $\alpha-1<0$, it follows that
\[
\Theta_{\eta_n}(x)^{\alpha-1}\ge m^{\alpha-1}.
\]
Using also the lower bound $c\ge c_*$, we obtain
\[
\mathbf I_{\alpha,c}(\eta_n)
\ge
c_*\,m^{\alpha-1}
\int_\Gamma\int_0^{+\infty}|\dot\gamma(t)|\,dt\,d\eta_n(\gamma)
=
c_*\,m^{\alpha-1}\int_\Gamma \ell(\gamma)\,d\eta_n(\gamma),
\]
where
\[
\ell(\gamma):=\int_0^{+\infty}|\dot\gamma(t)|\,dt.
\]
Hence
\[
\sup_n\int_\Gamma \ell(\gamma)\,d\eta_n(\gamma)<\infty.
\]

By the compactness theorem for traffic plans with uniformly bounded average length (see \cite[Chapter 3]{BernotCasellesMorel2009}), after extraction of a subsequence not relabelled, there exists a traffic plan $\eta$ such that
\[
\eta_n \rightharpoonup \eta
\qquad\text{narrowly in }\cP(\Gamma).
\]
Since the endpoint evaluation maps are continuous on the relevant compact subsets of $\Gamma$, the initial and terminal marginals are preserved in the limit, so that $\eta$ is an admissible irrigation plan from $\mu_{\stim}^{+}$ to $\mu_{\react}^{-}$. These facts are standard in the theory of traffic plans; see \cite{BernotCasellesMorel2009}.

Let $v_\eta$ denote the current induced by $\eta$. By continuity of the superposition map from traffic plans to vector-valued currents under narrow convergence and uniform length control, one has
\[
v_n \overset{*}{\rightharpoonup} v_\eta
\qquad\text{in }\cM(\Omega;\R^d).
\]
By uniqueness of the weak-* limit, it follows that
\[
v=v_\eta.
\]

We now use lower semicontinuity on the Lagrangian side. The functional $\mathbf I_{\alpha,c}$ is a weighted Lagrangian $H$-mass with $H(r)=r^\alpha$. The lower semicontinuity of the Lagrangian $H$-mass is proved in \cite{KrukowskiMarchese2026}; in the present setting the same argument applies to the continuous bounded anisotropic weight
\[
(x,\tau)\longmapsto c(x,\tau),
\]
and therefore yields
\[
\mathbf I_{\alpha,c}(\eta)
\le
\liminf_{n\to\infty}\mathbf I_{\alpha,c}(\eta_n).
\]
Combining this with the energy identity gives
\[
\cM_{\alpha,c}(v)
=
\cM_{\alpha,c}(v_\eta)
=
\mathbf I_{\alpha,c}(\eta)
\le
\liminf_{n\to\infty}\mathbf I_{\alpha,c}(\eta_n)
=
\liminf_{n\to\infty}\cM_{\alpha,c}(v_n).
\]

Finally, since $\mathbf I_{\alpha,c}(\eta)<\infty$, the associated current $v_\eta$ is rectifiable; see again \cite{BernotCasellesMorel2009,Pegon2017}. Therefore
\[
v=v_\eta\in \cA(\mu_{\stim}^{+},\mu_{\react}^{-}),
\]
and the proof is complete.
\end{proof}

We now state a standard well-posedness result in the spirit of branched transport theory.

\begin{theorem}\label{thm:continuous-existence}
Let $\alpha\in(1-\frac1d,1)$. Assume that:
\begin{enumerate}[label=(\roman*)]
    \item $\Omega\subset \R^d$ is compact and connected;
    \item $\mu_{\stim}^{+}$ and $\mu_{\react}^{-}$ are balanced finite nonnegative measures on $\Omega$;
    \item $c:\Omega\times\mathbb S^{d-1}\to(0,\infty)$ is continuous and there exist constants
    \[
    0<c_*\le c^*<\infty
    \]
    such that
    \[
    c_*\le c(x,\tau)\le c^*
    \qquad\forall (x,\tau)\in\Omega\times\mathbb S^{d-1}.
    \]
\end{enumerate}
Then the minimization problem
\[
\inf\Bigl\{
\cM_{\alpha,c}(v):v\in \cA(\mu_{\stim}^{+},\mu_{\react}^{-})
\Bigr\}
\]
admits at least one minimizer.
\end{theorem}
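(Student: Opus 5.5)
The plan is the direct method of the calculus of variations, with \cref{lem:continuous-lsc} supplying both closedness of the admissible class and lower semicontinuity. The first step is to show that the infimum
\[
I:=\inf\bigl\{\cM_{\alpha,c}(v):v\in\cA(\mu_{\stim}^{+},\mu_{\react}^{-})\bigr\}
\]
is finite. Since $c\le c^*$, we have $\cM_{\alpha,c}\le c^*\,\mathbf M_\alpha$, where $\mathbf M_\alpha$ is the isotropic Xia mass. For $\alpha>1-\frac1d$ and $\Omega$ compact, Xia's irrigability theorem \cite{Xia2003,BernotCasellesMorel2009} gives an acyclic rectifiable current $\bar v$ with $\diver\bar v=\mu_{\stim}^+-\mu_{\react}^-$ and $\mathbf M_\alpha(\bar v)<\infty$. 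One obtains it by irrigating each measure from a fixed point through dyadic trees and gluing, then removing cycles, which does not increase the cost. This is where the hypothesis on $\alpha$ enters. The dyadic construction works in a cube containing $\Omega$, so I would either note that $\Omega$ may be replaced by a convex (or sufficiently regular connected) set, or accept that the construction lives in the convex hull. The connectedness hypothesis (i) is the only one suggesting that paths stay in $\Omega$. I expect this to be a delicate point, and I would handle it by using the standard result for measures on $\R^d$ and viewing currents in $\cM(\R^d;\R^d)$ supported in a fixed compact set. Hence $I\le c^*\mathbf M_\alpha(\bar v)<\infty$, and $I\ge 0$ trivially.

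Next take a minimizing sequence $(v_n)\subset\cA$ with $\cM_{\alpha,c}(v_n)\to I$. The key step is a uniform total-variation bound, which then gives weak-* compactness. For a rectifiable $v_n=\tau_n\theta_n\cH^1\llcorner M_n$ we have $|v_n|(\Omega)=\int_{M_n}\theta_n\,d\cH^1$. Acyclicity yields $\theta_n\le m$: through the Lagrangian representation used in the proof of \cref{lem:continuous-lsc}, the multiplicity is bounded by the total mass. Therefore
\[
\theta_n=\theta_n^{\alpha}\theta_n^{1-\alpha}\le m^{1-\alpha}\theta_n^\alpha,
\qquad
|v_n|(\Omega)\le m^{1-\alpha}c_*^{-1}\cM_{\alpha,c}(v_n)\le C.
\]
By Banach--Alaoglu (the space $\cM(\Omega;\R^d)$ is the dual of $C(\Omega;\R^d)$, $\Omega$ compact), a subsequence satisfies $v_n\overset{*}{\rightharpoonup}v$.

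Finally, apply \cref{lem:continuous-lsc} to this subsequence: its hypotheses are met, with the energies uniformly bounded by $I+1$ eventually. It yields $v\in\cA(\mu_{\stim}^{+},\mu_{\react}^{-})$ and $\cM_{\alpha,c}(v)\le\liminf\cM_{\alpha,c}(v_n)=I$, so $v$ is a minimizer. The main obstacle I anticipate is the finiteness step, specifically ensuring that the irrigating current built by Xia's construction is supported in $\Omega$ and is acyclic. If needed, I would reduce to the acyclic case via the standard fact that any finite-cost current can be replaced by an acyclic one with no larger cost and the same divergence \cite{BernotCasellesMorel2009,Pegon2017}.
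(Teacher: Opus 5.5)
Your argument has the same skeleton as the paper's proof. You get finiteness of the infimum from Xia's irrigability theorem and $c\le c^*$, take a minimizing sequence, extract a weak-* limit, and apply \cref{lem:continuous-lsc}. The one real difference is how compactness is obtained.

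The paper works on the Lagrangian side. It passes to good irrigation plans and uses the average-length bound, which is derived only inside the proof of \cref{lem:continuous-lsc}. It then applies compactness of traffic plans and transfers the convergence to the currents through the superposition map. You stay Eulerian: $\theta_n\le m$ for acyclic currents gives $|v_n|(\Omega)\le m^{1-\alpha}c_*^{-1}\cM_{\alpha,c}(v_n)$, and Banach--Alaoglu finishes (sequentially, since $C(\Omega;\R^d)$ is separable). This is the same estimate in the other language, because $\int\ell\,d\eta=|v_\eta|(\Omega)$ for a good plan. Your version is self-contained and lands directly in the topology in which the Lemma is stated. The paper's route additionally yields convergent plans, but the Lemma's proof extracts them again anyway.

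Your worry about the finiteness step is justified, and the paper does not settle it either; it simply invokes irrigability on a compact $\Omega$. Under hypothesis (i) alone this step cannot be closed, because the statement is then false. Let $\Omega$ be the closed topologist's sine curve, which is compact and connected, and put unit Dirac masses $\mu_{\stim}^+$, $\mu_{\react}^-$ in its two path components. Any $v\in\cA(\mu_{\stim}^{+},\mu_{\react}^{-})$ would, by Smirnov's decomposition, produce a rectifiable curve inside $\operatorname{supp}|v|\subset\Omega$ joining them. So the admissible class is empty.

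Your fallback of building the current in $\R^d$ or in the convex hull does not help. Elements of $\cA$ are measures on $\Omega$, and $c$ is only defined on $\Omega\times\mathbb S^{d-1}$. The correct repair is your first option, an extra hypothesis. For example, take $\Omega$ convex, or a bi-Lipschitz image of a cube (a Lipschitz push-forward multiplies $\mathbf M_\alpha$ by at most the Lipschitz constant), or simply assume the infimum is finite.

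Finally, like the paper, you take from the Lemma that the weak-* limit is acyclic, which weak-* convergence does not guarantee. Injective spirals from $a$ to $b$ can converge to a path going once around a circle, whose current contains a cycle. This is harmless for the theorem. The cycle-removal fact you quote at the end, applied to the limit, keeps the divergence and only lowers $\theta$, hence $\cM_{\alpha,c}$.
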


\begin{proof}
We first show that the admissible class is nonempty and that the infimum is finite. Since
\[
\alpha>1-\frac1d
\]
and the measures $\mu_{\stim}^{+}$ and $\mu_{\react}^{-}$ are balanced on the compact set $\Omega$, the classical irrigability theorem yields the existence of an admissible irrigation plan with finite $\alpha$-energy. Passing to the associated Eulerian current, one obtains some
\[
v_0\in \cA(\mu_{\stim}^{+},\mu_{\react}^{-})
\]
such that
\[
\cM_{\alpha,c}(v_0)\le c^*\,\cM_\alpha(v_0)<\infty.
\]
Hence the infimum is finite.

Let $(v_n)_n\subset \cA(\mu_{\stim}^{+},\mu_{\react}^{-})$ be a minimizing sequence:
\[
\cM_{\alpha,c}(v_n)\longrightarrow
\inf\Bigl\{
\cM_{\alpha,c}(v):v\in \cA(\mu_{\stim}^{+},\mu_{\react}^{-})
\Bigr\}.
\]
In particular, there exists $C>0$ such that
\[
\cM_{\alpha,c}(v_n)\le C
\qquad\forall\; n \in \N.
\]

Since the energy density is strictly positive, any cyclic component can be removed without changing the divergence and while decreasing the energy. Therefore, without loss of generality, we may assume that every $v_n$ is acyclic. By the correspondence with good irrigation plans and the compactness theorem for traffic plans with uniformly bounded average length, after extraction of a subsequence not relabelled there exists
\[
v\in \cM(\Omega;\R^d)
\]
such that
\[
v_n \overset{*}{\rightharpoonup} v
\qquad\text{in }\cM(\Omega;\R^d).
\]

We may now apply \cref{lem:continuous-lsc}. It follows that
\[
v\in \cA(\mu_{\stim}^{+},\mu_{\react}^{-})
\]
and
\[
\cM_{\alpha,c}(v)
\le
\liminf_{n\to\infty}\cM_{\alpha,c}(v_n).
\]
Since $(v_n)_n$ is minimizing, the right-hand side is exactly the infimum of the problem. Hence
\[
\cM_{\alpha,c}(v)
=
\inf\Bigl\{
\cM_{\alpha,c}(w):w\in \cA(\mu_{\stim}^{+},\mu_{\react}^{-})
\Bigr\},
\]
which proves that $v$ is a minimizer.
\end{proof}

\section{Hybrid stochastic extension}\label{sec:hybrid}

The branched transport model introduced above is purely geometric: it selects a routing architecture by minimizing a ramified transport cost between a stimulation source measure and a reaction target measure. In many situations, however, one also wishes to encode whether a given routing architecture is dynamically compatible with observed brain-state transitions. This motivates a hybrid extension in which each admissible weighted graph induces a stochastic neural dynamics, and the graph is penalized not only by its transport energy, but also by the minimum control effort required to realize prescribed initial and terminal distributions under that dynamics.

\subsection{Graph-induced linear stochastic dynamics}

We work in the finite-library discrete setting introduced in \cref{sec:discrete}. Let
\[
(G,w)\in \cG_{\mathrm{adm}}
\]
be an admissible weighted graph with vertex set
\[
V=\{v_1,\dots,v_n\}.
\]
We identify each vertex with one neural state variable, so that the neural activity is described by a vector
\[
X_t=(X_t^1,\dots,X_t^n)\in\R^n.
\]

\begin{definition}
Given an admissible weighted graph $(G,w)$, define the directed weighted adjacency matrix
\[
W_{G,w}=(W_{ij})_{1\le i,j\le n}
\]
by
\[
W_{ij}
:=
\sum_{\substack{e\in E\\ e^- = i,\ e^+ = j}} w_e,
\]
that is, $W_{ij}$ is the total flux carried by edges directed from $v_i$ to $v_j$.

Since the stochastic dynamics below is meant to represent effective propagation rather than purely directed mass balance, we introduce the symmetrized weighted adjacency
\[
S_{G,w}
:=
\frac12\bigl(W_{G,w}+W_{G,w}^\top\bigr).
\]
We then define the weighted degree matrix
\[
D_{G,w}
:=
\operatorname{diag}\!\left(\sum_{j=1}^n (S_{G,w})_{1j},\dots,\sum_{j=1}^n (S_{G,w})_{nj}\right),
\]
and the associated weighted graph Laplacian
\[
L_{G,w}:=D_{G,w}-S_{G,w}.
\]
\end{definition}

The matrix $L_{G,w}$ is symmetric and positive semidefinite. It is therefore natural to use it as the basic graph-induced diffusion operator in a linear neural dynamics.

\begin{definition}
Fix constants
\[
\kappa>0,\qquad \beta>0,\qquad \sigma_0>0,\qquad \sigma_1\ge 0.
\]
For each admissible weighted graph $(G,w)$, define
\begin{equation}\label{eq:canonical-A}
A_{G,w}
:=
-\kappa I_n-\beta L_{G,w},
\end{equation}
and
\begin{equation}\label{eq:canonical-C}
C_{G,w}
:=
\sigma_0 I_n+\sigma_1 D_{G,w}^{1/2}.
\end{equation}
\end{definition}

The interpretation is straightforward: the Laplacian term describes graph-mediated spreading of activity, the scalar term $-\kappa I_n$ models intrinsic relaxation toward baseline, and the matrix $C_{G,w}$ allows the amplitude of stochastic fluctuations to depend on the local weighted degree while preserving uniform nondegeneracy through $\sigma_0 I_n$.

We then consider the controlled linear stochastic differential equation
\begin{equation}\label{eq:graph-dynamics}
dX_t
=
A_{G,w}X_t\,dt
+
B_{\stim}a_t\,dt
+
u_t\,dt
+
C_{G,w}\,dW_t,
\qquad t\in[0,T],
\end{equation}
where:
\begin{enumerate}[label=(\roman*)]
    \item $X_t\in\R^n$ is the neural state vector;
    \item $a_t$ is a prescribed exogenous stimulus profile;
    \item $B_{\stim}\in\R^{n\times m_{\stim}}$ injects the stimulation into designated entry regions;
    \item $u_t$ is an adapted control process with values in $\R^n$;
    \item $W_t$ is a standard $n$-dimensional Brownian motion.
\end{enumerate}
The uncontrolled process corresponds to $u_t\equiv 0$.

\begin{proposition}\label{prop:A-stable}
For every admissible weighted graph $(G,w)$, the matrix $A_{G,w}$ defined in \eqref{eq:canonical-A} is symmetric negative definite. In particular, all its eigenvalues are strictly negative, and the uncontrolled linear system is exponentially stable.
\end{proposition}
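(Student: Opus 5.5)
The plan is to prove the three claims in order: symmetry, negative definiteness, and exponential stability. Positive semidefiniteness of $L_{G,w}$ carries all the weight, and we verify it directly rather than only citing it.

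\emph{Symmetry.} By construction $S_{G,w}=\frac12(W_{G,w}+W_{G,w}^\top)$ is symmetric and $D_{G,w}$ is diagonal. Hence $L_{G,w}=D_{G,w}-S_{G,w}$ is symmetric, and so is $A_{G,w}=-\kappa I_n-\beta L_{G,w}$.

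\emph{Positive semidefiniteness of the Laplacian.} The entries of $S_{G,w}$ are nonnegative, since every $w_e\ge 0$. Write $s_{ij}:=(S_{G,w})_{ij}$. For $x\in\R^n$ we compute the quadratic form, using the symmetry of $S_{G,w}$ and the definition of the degrees $d_i=\sum_j s_{ij}$:
\[
x^\top L_{G,w}x=\sum_i d_i x_i^2-\sum_{i,j}s_{ij}x_ix_j=\frac12\sum_{i,j}s_{ij}(x_i-x_j)^2\ge 0 .
\]
Diagonal entries $s_{ii}$, which come from possible loops, contribute zero on both sides, so they cause no trouble. This is the only step needing care, and it is elementary.

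\emph{Negative definiteness.} Combining the previous step with $\beta>0$ gives
\[
x^\top A_{G,w}x=-\kappa|x|^2-\beta\,x^\top L_{G,w}x\le-\kappa|x|^2 .
\]
Since $\kappa>0$, $A_{G,w}$ is negative definite. By the spectral theorem for symmetric matrices, every eigenvalue satisfies $\lambda\le-\kappa<0$.

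\emph{Exponential stability.} Diagonalize $A_{G,w}=Q\Lambda Q^\top$ with $Q$ orthogonal. Then
\[
\|e^{tA_{G,w}}\|=\max_i e^{t\lambda_i}\le e^{-\kappa t}.
\]
Consequently the uncontrolled deterministic part of \eqref{eq:graph-dynamics}, with $a\equiv0$ and $u\equiv0$, satisfies $|X_t|\le e^{-\kappa t}|X_0|$. The same bound follows from the energy estimate $\frac{d}{dt}|X_t|^2=2X_t^\top A_{G,w}X_t\le-2\kappa|X_t|^2$.

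In the stochastic setting with noise, we interpret stability in the standard linear-systems sense, namely that $A_{G,w}$ is Hurwitz. Concretely, the mean satisfies $\mathbb E X_t=e^{tA_{G,w}}\mathbb E X_0$ and therefore decays exponentially. The covariance converges to the unique solution of the Lyapunov equation $A_{G,w}\Sigma+\Sigma A_{G,w}^\top+C_{G,w}C_{G,w}^\top=0$.

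We expect no genuine obstacle. The main point is that the bound $\lambda_i\le-\kappa$ is uniform over all admissible graphs and weights, because it uses only $w_e\ge0$.
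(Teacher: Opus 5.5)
Your proof is correct and follows the same route as the paper: positive semidefiniteness of $L_{G,w}$ together with $\kappa,\beta>0$ places the spectrum of the symmetric matrix $A_{G,w}=-\kappa I_n-\beta L_{G,w}$ in $(-\infty,-\kappa]$. You go a bit further than the paper in two ways: you verify $L_{G,w}\succeq 0$ through the identity $x^\top L_{G,w}x=\frac12\sum_{i,j}s_{ij}(x_i-x_j)^2$, whereas the paper simply asserts it, and you make the decay bound $\|e^{tA_{G,w}}\|\le e^{-\kappa t}$ explicit and uniform in $(G,w)$.
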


\begin{proof}
Since $L_{G,w}$ is symmetric positive semidefinite, all its eigenvalues are nonnegative. Hence the eigenvalues of
\[
A_{G,w}=-\kappa I_n-\beta L_{G,w}
\]
are of the form
\[
-\kappa-\beta\lambda,
\qquad \lambda\ge 0.
\]
Since $\kappa>0$ and $\beta>0$, every such eigenvalue is strictly negative.
\end{proof}

\begin{proposition}\label{prop:C-elliptic}
For every admissible weighted graph $(G,w)$, the matrix $C_{G,w}$ defined in \eqref{eq:canonical-C} is symmetric positive definite. Moreover,
\[
C_{G,w}C_{G,w}^\top \ge \sigma_0^2 I_n
\qquad\text{for all }(G,w)\in\cG_{\mathrm{adm}}.
\]
\end{proposition}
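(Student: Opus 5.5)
The argument only uses the diagonal structure of $D_{G,w}$, so no spectral information about $L_{G,w}$ is needed.

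\textbf{Step 1: $D_{G,w}^{1/2}$ is diagonal with nonnegative entries.} Every flux satisfies $w_e\ge 0$, so $W_{G,w}$ has nonnegative entries. Hence $S_{G,w}$ has nonnegative entries, and each diagonal entry
\[
d_i:=\sum_{j=1}^n (S_{G,w})_{ij}
\]
of $D_{G,w}$ is nonnegative. Therefore $D_{G,w}^{1/2}=\operatorname{diag}(\sqrt{d_1},\dots,\sqrt{d_n})$ is well defined as the entrywise square root, and it is a diagonal matrix with nonnegative entries.

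\textbf{Step 2: $C_{G,w}$ is diagonal, symmetric and positive definite.} Since $\sigma_1\ge 0$, it follows that
\[
C_{G,w}=\operatorname{diag}\bigl(\sigma_0+\sigma_1\sqrt{d_1},\dots,\sigma_0+\sigma_1\sqrt{d_n}\bigr).
\]
This matrix is diagonal, hence symmetric. Every diagonal entry satisfies $\sigma_0+\sigma_1\sqrt{d_i}\ge\sigma_0>0$, so all eigenvalues are strictly positive and $C_{G,w}$ is positive definite.

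\textbf{Step 3: the uniform lower bound.} By symmetry, $C_{G,w}C_{G,w}^\top=C_{G,w}^2$. This is the diagonal matrix with entries $(\sigma_0+\sigma_1\sqrt{d_i})^2$, each of which is at least $\sigma_0^2$. Hence
\[
C_{G,w}C_{G,w}^\top-\sigma_0^2 I_n
\]
is diagonal with nonnegative entries, so it is positive semidefinite. This is the Loewner inequality $C_{G,w}C_{G,w}^\top\ge\sigma_0^2 I_n$.

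The constant $\sigma_0^2$ does not depend on $(G,w)$, so the bound holds uniformly over $\cG_{\mathrm{adm}}$.

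The only point needing care is Step 1. The nonnegativity of the weights $w_e$ guarantees that $D_{G,w}^{1/2}$ is real and has nonnegative diagonal entries. Without it, the square root would be undefined or could produce entries that reduce the lower bound below $\sigma_0$.
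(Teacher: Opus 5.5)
Your proof is correct and follows essentially the same route as the paper. Both arguments use that $D_{G,w}$ is diagonal with nonnegative entries, so $C_{G,w}=\sigma_0 I_n+\sigma_1 D_{G,w}^{1/2}$ is symmetric with spectrum bounded below by $\sigma_0$, which gives $C_{G,w}C_{G,w}^\top=C_{G,w}^2\ge\sigma_0^2 I_n$; your version is simply more explicit, deriving the nonnegativity of the degrees from $w_e\ge 0$ and writing $C_{G,w}$ out as a diagonal matrix, which spells out the paper's ``follows immediately'' step.
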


\begin{proof}
Since $D_{G,w}$ is diagonal with nonnegative entries, $D_{G,w}^{1/2}$ is well defined and positive semidefinite. Therefore
\[
C_{G,w}=\sigma_0 I_n+\sigma_1 D_{G,w}^{1/2}
\]
is symmetric and all its eigenvalues are bounded below by $\sigma_0>0$. Hence $C_{G,w}$ is positive definite. The matrix inequality follows immediately.
\end{proof}

\begin{remark}
The maps
\[
(G,w)\longmapsto W_{G,w},\qquad
(G,w)\longmapsto S_{G,w},\qquad
(G,w)\longmapsto D_{G,w},\qquad
(G,w)\longmapsto L_{G,w}
\]
are continuous in the finite-dimensional edge-weight parameterization of the admissible graph library. Consequently,
\[
(G,w)\longmapsto A_{G,w},
\qquad
(G,w)\longmapsto C_{G,w}
\]
are continuous as well.
\end{remark}

\subsection{Path-space relative entropy and dynamic cost}

Fix $T>0$ and two Gaussian marginals
\[
X_0\sim \mathcal N(m_0,\Sigma_0),
\qquad
X_T\sim \mathcal N(m_T,\Sigma_T),
\]
with $\Sigma_0,\Sigma_T$ symmetric positive definite. For each admissible weighted graph $(G,w)$, let $\mathbb P^{G,w,u}$ denote the law on path space
\[
\Omega_T:=C([0,T];\R^n)
\]
induced by \eqref{eq:graph-dynamics}, and let $\mathbb P^{G,w,0}$ denote the corresponding uncontrolled law.

\begin{definition}\label{def:dynamic-cost}
The graph-dependent dynamic cost is defined by
\[
\cJ_{\dyn}(G,w)
:=
\inf_u
\KL\!\left(
\mathbb P^{G,w,u}\,\middle\|\,\mathbb P^{G,w,0}
\right),
\]
where the infimum is taken over all adapted controls $u$ such that the corresponding controlled law $\mathbb P^{G,w,u}$ satisfies the prescribed initial and terminal Gaussian marginal constraints.
\end{definition}

Thus, $\cJ_{\dyn}(G,w)$ is a graph-dependent Schr\"odinger bridge cost: it measures the minimum path-space relative entropy needed to deform the uncontrolled diffusion induced by the graph into a controlled process realizing the required endpoint distributions.

\begin{proposition}\label{prop:KL-formula}
For every admissible weighted graph $(G,w)$,
\[
\KL\!\left(
\mathbb P^{G,w,u}\,\middle\|\,\mathbb P^{G,w,0}
\right)
=
\int_{\Omega_T}
\log\!\left(
\frac{d\mathbb P^{G,w,u}}{d\mathbb P^{G,w,0}}
\right)\,
d\mathbb P^{G,w,u}.
\]
Moreover, under the standard assumptions of Girsanov's theorem,
\[
\KL\!\left(
\mathbb P^{G,w,u}\,\middle\|\,\mathbb P^{G,w,0}
\right)
=
\frac12\,
\mathbb E^{G,w,u}\!\left[
\int_0^T
u_t^\top\bigl(C_{G,w}C_{G,w}^\top\bigr)^{-1}u_t\,dt
\right].
\]
\end{proposition}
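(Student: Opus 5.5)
The first identity is the definition of relative entropy, so the plan is to dispatch it quickly. Whenever $\mathbb P^{G,w,u}\ll\mathbb P^{G,w,0}$ on $\Omega_T$, we have
\[
\KL\!\left(\mathbb P^{G,w,u}\,\middle\|\,\mathbb P^{G,w,0}\right)=\int\log\frac{d\mathbb P^{G,w,u}}{d\mathbb P^{G,w,0}}\,d\mathbb P^{G,w,u},
\]
and the value is $+\infty$ otherwise. Absolute continuity will itself follow from the Girsanov step below, so the real work lies in the second formula.

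For the quadratic formula, I would first fix the standing assumptions. Both processes start from the same initial law $\mathcal N(m_0,\Sigma_0)$. The control is a feedback or adapted process $u_t$ satisfying a Novikov-type condition, or at least $\mathbb E\int_0^T|u_t|^2\,dt<\infty$ with a localization argument.

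By \cref{prop:C-elliptic}, $C:=C_{G,w}$ is symmetric positive definite with $CC^\top\ge\sigma_0^2I_n$, so the drift perturbation can be written as $u_t=C h_t$ with $h_t:=C^{-1}u_t$. Girsanov's theorem, applied to the Brownian motion $W$, then identifies the density of the controlled law with respect to the uncontrolled one, restricted to the path-space filtration:
\[
\frac{d\mathbb P^{G,w,u}}{d\mathbb P^{G,w,0}}=\exp\Bigl(\int_0^T h_t\cdot d\widetilde W_t-\tfrac12\int_0^T|h_t|^2dt\Bigr).
\]
Here $\widetilde W$ is the Brownian motion under the uncontrolled law, recovered from the canonical path via $C^{-1}(dX_t-A_{G,w}X_t\,dt-B_{\stim}a_t\,dt)$. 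This reconstruction uses the invertibility of $C$, and the density is expressed as a functional of the path because $u$ is adapted.

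Taking the logarithm and integrating under $\mathbb P^{G,w,u}$ comes next. Under the controlled law, $d\widetilde W_t=h_t\,dt+dW_t$ with $W$ a Brownian motion. Hence
\[
\log\frac{d\mathbb P^{G,w,u}}{d\mathbb P^{G,w,0}}=\int_0^T h_t\cdot dW_t+\tfrac12\int_0^T|h_t|^2\,dt.
\]
The stochastic integral has zero expectation by square-integrability of $h$. What remains is $\tfrac12\mathbb E\int_0^T|C^{-1}u_t|^2\,dt$, and since $C$ is symmetric, $|C^{-1}u_t|^2=u_t^\top(CC^\top)^{-1}u_t$, which gives the stated formula.

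The main obstacle is rigor at the level of path-space laws. For a general adapted $u$ that is not a path functional, the law of $X$ only sees the optional projection of $u$ onto the filtration of $X$. In that case the correct statement is an inequality, with equality exactly for Markov or feedback controls. I would therefore state the result for controls that are progressively measurable functionals of $X$, which is the class covered by the ``standard assumptions''. If Novikov's condition fails, I would localize by stopping times $\tau_k$ with $\int_0^{\tau_k}|h|^2\le k$, prove the identity up to $\tau_k$, and pass to the limit using monotone convergence on the right-hand side and lower semicontinuity of KL on the left.
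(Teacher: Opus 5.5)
Your proof is correct and follows the same route as the paper: the first identity is the definition of relative entropy, and the second comes from using the invertibility of $C_{G,w}$ (\cref{prop:C-elliptic}) to apply Girsanov's theorem, then taking the logarithm of the density and integrating under the controlled law. Your added details are exactly what the paper leaves implicit in the phrase ``standard assumptions of Girsanov's theorem'': both processes share the initial law $\mathcal N(m_0,\Sigma_0)$, the martingale term has zero mean, and controls are square-integrable functionals of $X$, so no optional-projection gap arises.
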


\begin{proof}
The first identity is the definition of relative entropy. Since the controlled and uncontrolled processes differ only by the drift term $u_t$, and since $C_{G,w}C_{G,w}^\top$ is invertible by \cref{prop:C-elliptic}, Girsanov's theorem yields the Radon--Nikodym derivative of $\mathbb P^{G,w,u}$ with respect to $\mathbb P^{G,w,0}$. Taking the logarithm and integrating with respect to the controlled law gives the stated quadratic formula.
\end{proof}

\begin{remark}
\Cref{prop:KL-formula} shows that $\cJ_{\dyn}(G,w)$ is a graph-dependent quadratic stochastic control cost. The hybrid model therefore supplements the ramified transport energy with a minimum-energy control penalty on the graph-induced diffusion.
\end{remark}

Because the control enters additively in all state components and the noise is uniformly nondegenerate, the endpoint-constrained linear-Gaussian bridge problem is feasible under the present assumptions.

\begin{proposition}\label{prop:dynamic-cost-finite}
For every admissible weighted graph $(G,w)\in\cG_{\mathrm{adm}}$, the minimization problem in \cref{def:dynamic-cost} is feasible, the value $\cJ_{\dyn}(G,w)$ is finite, and the infimum is attained.
\end{proposition}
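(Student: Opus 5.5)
Fix an admissible $(G,w)$ and write $A=A_{G,w}$, $C=C_{G,w}$. By \cref{prop:C-elliptic} the diffusion $Q:=CC^\top\ge\sigma_0^2 I_n$ is nondegenerate. The plan is to recast $\cJ_{\dyn}(G,w)$ as a static Schr\"odinger problem for a linear--Gaussian reference process, and then rely on the classical existence theory for such bridges.

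\textbf{Step 1 (Gaussian reference law).} The uncontrolled law $\mathbb P^{G,w,0}$ is taken with initial marginal $\mathcal N(m_0,\Sigma_0)$. It is the law of a Gaussian Markov process, and its transition kernel from time $0$ to time $T$ is
\[
\mathcal N\Bigl(e^{AT}x+\textstyle\int_0^T e^{A(T-s)}B_{\stim}a_s\,ds,\;\Sigma_{0T}\Bigr),\qquad
\Sigma_{0T}:=\int_0^T e^{A(T-s)}Q\,e^{A^\top(T-s)}\,ds .
\]
Since $Q\ge\sigma_0^2I_n$, we have $\Sigma_{0T}\ge\sigma_0^2\int_0^Te^{As}e^{A^\top s}ds>0$. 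This is a controllability-Gramian bound, and it uses only invertibility of $e^{As}$; the stability from \cref{prop:A-stable} is not even needed. Consequently the two-time marginal $R_{0T}$ of the reference is a nondegenerate Gaussian on $\R^n\times\R^n$.

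\textbf{Step 2 (reduction and feasibility).} Use the decomposition $\KL(P\|R)=\KL(P_{0T}\|R_{0T})+\int\KL(P^{xy}\|R^{xy})\,dP_{0T}$. It shows that the dynamic problem over path measures with marginals $\mathcal N(m_0,\Sigma_0)$ and $\mathcal N(m_T,\Sigma_T)$ equals the static problem
\[
\inf\{\KL(\pi\|R_{0T}) : \pi\in\Pi(\mathcal N(m_0,\Sigma_0),\mathcal N(m_T,\Sigma_T))\}.
\]
For feasibility, take $\pi$ to be the product coupling of the two marginals. Both it and $R_{0T}$ are nondegenerate Gaussians, so $\KL(\pi\|R_{0T})$ is given by the explicit Gaussian formula and is finite. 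Hence the static value is finite.

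\textbf{Step 3 (attainment).} Choose a coupling $\pi$ of the two prescribed Gaussians. The set of such couplings is tight and weakly closed, and $\KL(\cdot\|R_{0T})$ is weakly lower semicontinuous with strictly convex, compact sublevel sets. Together with finiteness from Step 2, this gives a unique minimizer $\pi^*$. The optimal path law is then $P^*=\int R^{xy}\,d\pi^*(x,y)$, and it has finite entropy relative to $\mathbb P^{G,w,0}$.

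\textbf{Step 4 (recovering a control).} This is the main obstacle: $P^*$ must be realized as the law of \eqref{eq:graph-dynamics} for an adapted control $u$. Finiteness of $\KL(P^*\|\mathbb P^{G,w,0})$ and the Girsanov/F\"ollmer representation give a drift $u^*$, adapted with $\mathbb E\int_0^T|u^*_t|^2dt<\infty$, such that $P^*=\mathbb P^{G,w,u^*}$. By \cref{prop:KL-formula} its entropy equals $\frac12\mathbb E\int_0^T (u_t^*)^\top Q^{-1}u_t^*\,dt$.

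To make this fully explicit in the Gaussian setting, I would instead invoke the known covariance-steering solution. Here $u^*_t=K_tX_t+v_t$ is an affine feedback, with $K_t$ obtained from a Riccati equation driven by the full-rank input matrix $I_n$ and the nondegenerate noise $Q$. This closed form gives feasibility and attainment directly. Any other feasible control has entropy at least $\KL(P_{0T}\|R_{0T})\ge\KL(\pi^*\|R_{0T})$, so $u^*$ is optimal.
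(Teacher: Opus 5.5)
Your argument is correct, but it follows a genuinely different route from the paper. The paper stays at the level of controls. It invokes ``standard linear-Gaussian steering'' to produce a feasible control, using the exponential stability of \cref{prop:A-stable}, the nondegeneracy of \cref{prop:C-elliptic}, and full actuation. It then appeals to the direct method in a Hilbert space of controls for the quadratic Girsanov cost of \cref{prop:KL-formula}.

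You instead work with path measures throughout:
\begin{enumerate}[label=(\roman*)]
\item the additive decomposition of relative entropy reduces the problem to the static Schr\"odinger problem on couplings of $\mathcal N(m_0,\Sigma_0)$ and $\mathcal N(m_T,\Sigma_T)$;
\item the product coupling gives finiteness;
\item tightness of couplings, together with lower semicontinuity and strict convexity of $\KL(\cdot\|R_{0T})$, gives existence and uniqueness of $\pi^*$;
\item only at the end do you return to a control, via F\"ollmer's drift representation or the explicit Riccati/covariance-steering feedback.
\end{enumerate}

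What your route buys is that compactness is placed where it is actually available. The paper's direct-method step needs two things it does not justify: that the set of adapted controls whose laws hit the terminal marginal is weakly closed, and that the cost $\mathbb E^{G,w,u}[\cdot]$ is convex and lower semicontinuous in $u$. Neither is evident when the expectation itself depends on $u$, e.g.\ for feedback controls. On couplings both properties are automatic. You also correctly observe, through the Gramian bound, that only $C_{G,w}C_{G,w}^\top\ge\sigma_0^2 I_n$ is needed, not the stability of $A_{G,w}$. In addition you obtain the optimal law uniquely and in closed affine-feedback form. The price is heavier external machinery: the chain rule for relative entropy, and either the finite-entropy Girsanov theorem of F\"ollmer and L\'eonard or the Riccati solvability of Chen--Georgiou--Pavon.

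One point to keep explicit: feasibility in the paper's sense means existence of an adapted control, not merely of a finite-entropy path law, and this is delivered only by your Step 4. If adapted controls are understood in the strong sense on a fixed Brownian filtration, the affine feedback $u^*_t=K_tX_t+v_t$, which yields a strong solution, closes this point. The F\"ollmer drift, which in general gives only a weak solution, does not.
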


\begin{proof}
By \cref{prop:A-stable}, the uncontrolled drift is exponentially stable, and by \cref{prop:C-elliptic} the diffusion is uniformly nondegenerate. Since the control enters through the full state vector, standard linear-Gaussian steering arguments imply that one can realize the prescribed initial and terminal Gaussian marginals over the finite time horizon $[0,T]$. Hence the admissible class in \cref{def:dynamic-cost} is nonempty.

By \cref{prop:KL-formula}, the dynamic cost is the value of a strictly convex quadratic control problem of the form
\[
\inf_u
\frac12\,
\mathbb E^{G,w,u}\!\left[
\int_0^T
u_t^\top\bigl(C_{G,w}C_{G,w}^\top\bigr)^{-1}u_t\,dt
\right]
\]
under endpoint marginal constraints. Since the integrand is nonnegative and coercive, and the admissible class is nonempty, the direct method in the corresponding Hilbert control space yields existence of an optimal adapted control. In particular, the value is finite and attained.
\end{proof}

\subsection{Hybrid graph-selection functional}

We now combine the geometric branched transport term with the graph-dependent dynamic cost.

\begin{definition}[Hybrid functional]\label{def:hybrid-functional}
Let $\lambda\ge 0$. The hybrid branched-stochastic functional is
\[
\cF_\lambda(G,w)
:=
\cE_\alpha(G,w)+\lambda\,\cJ_{\dyn}(G,w),
\qquad
(G,w)\in\cG_{\mathrm{adm}}.
\]
\end{definition}

The associated hybrid optimization problem is
\begin{equation}\label{eq:hybrid-problem_1}
\min\Bigl\{
\cF_\lambda(G,w):(G,w)\in\cG_{\mathrm{adm}}
\Bigr\}.
\end{equation}

\begin{remark}
The parameter $\lambda$ balances:
\begin{enumerate}[label=(\roman*)]
    \item a purely geometric/anatomical routing criterion, encoded by the branched transport energy $\cE_\alpha$, and
    \item a dynamical compatibility criterion, encoded by the minimum path-space control cost $\cJ_{\dyn}$.
\end{enumerate}
For $\lambda=0$, one recovers the purely geometric model. For $\lambda>0$, graphs that are dynamically harder to support become less favorable even if they are geometrically efficient.
\end{remark}

To obtain existence of minimizers for \eqref{eq:hybrid-problem_1}, one needs a stability property of the graph-dependent bridge cost.

\begin{assumption}\label{ass:Jdyn-lsc}
The map
\[
(G,w)\longmapsto \cJ_{\dyn}(G,w)
\]
is lower semicontinuous on $\cG_{\mathrm{adm}}$ with respect to the finite-dimensional edge-weight parameterization of the admissible graph library.
\end{assumption}

\begin{remark}
Under the canonical choice
\[
A_{G,w}=-\kappa I_n-\beta L_{G,w},
\qquad
C_{G,w}=\sigma_0 I_n+\sigma_1 D_{G,w}^{1/2},
\]
continuity of $(G,w)\mapsto A_{G,w}$ and $(G,w)\mapsto C_{G,w}$ is immediate. It is therefore natural to expect lower semicontinuity of $\cJ_{\dyn}$ from the stability theory of linear-Gaussian Schr\"odinger bridge problems. We record it here as an assumption in order to keep the focus on the variational graph-selection problem.
\end{remark}

\begin{theorem}\label{thm:hybrid-existence}
Assume the finite-library setting of \cref{thm:discrete-existence}, and assume moreover \cref{ass:Jdyn-lsc}. Then the hybrid problem
\begin{equation*}\label{eq:hybrid-problem}
\min\Bigl\{
\cF_\lambda(G,w):(G,w)\in\cG_{\mathrm{adm}}
\Bigr\}
\end{equation*}
admits at least one minimizer.
\end{theorem}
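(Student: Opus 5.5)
The plan is to apply the direct method in the finite-dimensional edge-weight parameterization, exactly as in the proof of \cref{thm:discrete-existence}.

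First, I identify $\cG_{\mathrm{adm}}$ with the set
\[
\cA=\{w\in[0,\infty)^{|E_0|}: Aw=b\}.
\]
That proof already shows this set is closed, bounded, and hence compact. I will note, as that proof does, that it is nonempty whenever the problem is posed; the boundedness uses the fixed supply $b$ together with the finite library $G_0$.

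Second, I check that $\cF_\lambda$ is a proper lower semicontinuous function on $\cA$. The term $w\mapsto\cE_\alpha(w)=\sum_{e}\beta_e w_e^\alpha$ is continuous on $[0,\infty)^{|E_0|}$, and its values are finite. By \cref{prop:dynamic-cost-finite}, $\cJ_{\dyn}(G,w)$ is finite for every admissible $(G,w)$, so $\cF_\lambda<\infty$ on $\cA$. Since $\cJ_{\dyn}$ is a relative entropy, it is nonnegative, so $\cF_\lambda\ge 0$. By \cref{ass:Jdyn-lsc}, $\cJ_{\dyn}$ is lower semicontinuous. Because $\lambda\ge0$, the sum of a continuous function and $\lambda$ times a lower semicontinuous function is lower semicontinuous.

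Third, I take a minimizing sequence $w_k\in\cA$ with $\cF_\lambda(w_k)\to\inf\cF_\lambda$. The infimum is finite, lying in $[0,\cF_\lambda(w_1)]$. By compactness of $\cA$, a subsequence converges to some $w^*\in\cA$. Lower semicontinuity then gives
\[
\cF_\lambda(w^*)\le\liminf_k\cF_\lambda(w_k)=\inf\cF_\lambda,
\]
so $w^*$ is a minimizer. Equivalently, a lower semicontinuous function on a nonempty compact set attains its minimum.

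The only delicate point is the identification of a weighted subgraph with its weight vector. Edges with $w_e=0$ are treated as inactive, so the graph $G$ may change along the sequence as weights tend to zero. I must make sure that $\cJ_{\dyn}$, and the matrices $A_{G,w}$ and $C_{G,w}$, are regarded as functions of $w$ on the full vertex set $V_0$. This is consistent with the continuity remark on $(G,w)\mapsto A_{G,w}$ and $(G,w)\mapsto C_{G,w}$, and it is precisely the setting in which \cref{ass:Jdyn-lsc} is formulated. With this convention no further obstacle arises. The main work, the stability of the bridge cost, is assumed rather than proved.
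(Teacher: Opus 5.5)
Your route is the same as the paper's. You identify $\cG_{\mathrm{adm}}$ with $\cA=\{w\ge 0: Aw=b\}$ and import its compactness from the proof of \cref{thm:discrete-existence}. You then combine continuity of $\cE_\alpha$ with finiteness and lower semicontinuity of $\cJ_{\dyn}$ and apply the direct method. Your convention of treating $A_{G,w}$, $C_{G,w}$, $\cJ_{\dyn}$ as functions of $w$ on the fixed vertex set $V_0$ is a worthwhile clarification.

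The step that fails is the imported boundedness of $\cA$, and the paper's proof relies on the same claim. Kirchhoff's law only fixes the \emph{net} flux at each vertex. If $G_0$ contains a directed cycle $C$, then $w+t\mathbf{1}_C\in\cA$ for every $w\in\cA$ and $t\ge 0$. Nothing in the finite-library assumption excludes such cycles, and \cref{prop:cycle} explicitly contemplates admissible flows carrying them. For example, take edges $v_1\to v_2$, $v_2\to v_3$, $v_3\to v_2$ and $b(v_1)=1$, $b(v_2)=0$, $b(v_3)=-1$. The admissible weights $(w_{12},w_{23},w_{32})$ are then exactly $(1,1+t,t)$, $t\ge 0$. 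So a fixed supply $b$ and a finite library do not bound $\sum_e w_e$, and $\cA$ is not compact in general.

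The repair is a coercivity argument, and you already have the key ingredient. Since $\lambda\ge 0$, $\cJ_{\dyn}\ge 0$ (it is a relative entropy), and $\beta_{\min}:=\min_{e\in E_0}\beta_e>0$ (each $\beta_e\ge c_*\ell_e>0$), one has
\[
\cF_\lambda(w)\;\ge\;\cE_\alpha(w)\;\ge\;\beta_{\min}\max_{e\in E_0}w_e^{\alpha}\qquad\text{for all }w\in\cA .
\]
A minimizing sequence has bounded values, so it is bounded in $[0,\infty)^{|E_0|}$. Extract a convergent subsequence; its limit lies in $\cA$ because $\cA$ is closed. Lower semicontinuity then concludes exactly as you wrote. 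Equivalently, minimize over the sublevel set $\{w\in\cA:\cF_\lambda(w)\le\cF_\lambda(w_0)\}$, which is closed and bounded, hence compact.

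You cannot instead restore boundedness by cancelling cycles as in \cref{prop:cycle}. Removing a circulation changes $S_{G,w}$ and $D_{G,w}$, hence $A_{G,w}$ and $C_{G,w}$, and there is no reason for $\cJ_{\dyn}$ to decrease. So the reduction to cycle-free flows that works for $\cE_\alpha$ is not available for $\cF_\lambda$. With the coercivity step in place of the compactness claim, your proof is complete.
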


\begin{proof}
By the finite-library assumption, the admissible set $\cG_{\mathrm{adm}}$ is compact in the finite-dimensional edge-weight parameterization; this was established in the proof of \cref{thm:discrete-existence}. The geometric energy
\[
(G,w)\longmapsto \cE_\alpha(G,w)
\]
is continuous on $\cG_{\mathrm{adm}}$, while the dynamic term
\[
(G,w)\longmapsto \cJ_{\dyn}(G,w)
\]
is finite by \cref{prop:dynamic-cost-finite} and lower semicontinuous by \cref{ass:Jdyn-lsc}. Therefore
\[
(G,w)\longmapsto \cF_\lambda(G,w)
=
\cE_\alpha(G,w)+\lambda\,\cJ_{\dyn}(G,w)
\]
is lower semicontinuous on a compact set. By the direct method of the calculus of variations, it attains its minimum on $\cG_{\mathrm{adm}}$.
\end{proof}

\section{Interpretation of the minimizer}\label{sec:interpretation}

We now discuss the neuroscientific meaning of the optimal solution selected by the variational problem. The central modeling principle of the paper is that the minimizer is not merely an auxiliary mathematical object, but rather the inferred routing architecture through which the stimulation is propagated toward the reaction-producing configuration.

\begin{definition}
Let $v^*$ be a minimizer of the continuous problem \eqref{eq:continuous-problem}, and write
\[
v^*=\tau^*\theta^*\,\cH^1\llcorner M^*
\]
for its rectifiable representation. Let $(G^*,w^*)$ be a minimizer of the discrete problem \eqref{eq:discrete-problem}. The \emph{brain reaction map} associated with the stimulus--reaction pair $(\mu_{\stim}^+,\mu_{\react}^-)$ is defined as:
\begin{enumerate}[label=(\roman*)]
    \item the support $M^*=\supp(v^*)$ in the continuous setting;
    \item the embedded weighted graph $(G^*,w^*)$ in the discrete setting.
\end{enumerate}
\end{definition}

This definition reflects the guiding idea of the paper: the graph/current is the primary unknown, and its support is interpreted as the preferred stimulus-to-reaction routing architecture. In particular, the minimizer should not be viewed merely as a transport object connecting two measures, but as a candidate mesoscale map of neural propagation.

From this perspective, the optimal structure carries several distinct levels of interpretation. Branching points or vertices of high degree indicate regions where information is aggregated, redistributed, or split, and may therefore be regarded as integrative hubs. Edges with large weights correspond to routes that carry a substantial fraction of the transported flux and can be interpreted as principal propagation pathways. Terminal branches located near the support of $\mu_{\stim}^+$ identify likely entry routes of the stimulation into the effective transport architecture, whereas terminal branches located near the support of $\mu_{\react}^-$ identify the regions and pathways most directly involved in the production of the reaction. Finally, the total value of the objective functional provides a quantitative measure of the overall difficulty of producing the reaction from the stimulation under the anatomical and geometric constraints encoded by the model.


The brain reaction map should be interpreted as an effective routing architecture selected by a variational principle. In this sense, it is conceptually closer to a mesoscale or functional transport backbone than to a direct structural connectome. Its role is to identify which pathways are jointly preferred by the source--target configuration and by the transport cost.

The distinction between the discrete and continuous formulations is also meaningful from an interpretative viewpoint. In the discrete model, the minimizer is immediately represented as a graph over a prescribed family of candidate regions and routes, which is particularly convenient for computational implementations and ROI-level analyses. In the continuous formulation, by contrast, the minimizer is a rectifiable current whose support is free to select its own geometric routing structure. The continuous model is therefore better suited to situations in which one does not wish to prescribe the branching geometry in advance, but instead aims to infer it directly from the source--target pair.

A practical implementation of the framework requires estimating the pair $(\mu_{\stim}^+,\mu_{\react}^-)$ together with the anatomical cost density $c(x,\tau)$ from data. As discussed in \cref{sec:estimation}, the source measure $\mu_{\stim}^+$ may be estimated from early post-stimulus activity localized in sensory or entry regions, while the target measure $\mu_{\react}^-$ may be estimated from later activity associated with the realization of the reaction, for instance in premotor, motor, associative, or subcortical regions. The cost density $c(x,\tau)$ may be estimated from diffusion MRI, tractography, structural connectivity, or suitable hand-crafted priors that encode local ease or difficulty of propagation.

In the hybrid model introduced in \cref{sec:hybrid}, the interpretation becomes richer. In that setting, the minimizer is selected not only by geometric transport efficiency, but also by its compatibility with a stochastic neural dynamics induced by the graph. The matrices $A_{G,w}$ and $C_{G,w}$ encode, respectively, effective propagation and noise structure on the optimal weighted graph. The additional dynamic term $\cJ_{\dyn}(G,w)$ then measures the minimum path-space control cost required to realize prescribed endpoint distributions on that graph. Consequently, the hybrid minimizer may be interpreted as a routing architecture that is simultaneously geometrically economical and dynamically plausible.

Taken together, these considerations suggest that the proposed framework should be viewed as a variational tool for inferring effective stimulus-to-reaction propagation maps from source--target information and anatomical priors. Its main output is not a trajectory, but a transport architecture. This is precisely the sense in which the minimizer provides a mathematically defined candidate for a brain reaction map.









\section{Conclusion}

We have proposed a variational framework for stimulus-to-reaction brain mapping in which the unknown is an optimal transport graph or current rather than a control acting on a fixed substrate. The support of the minimizer is interpreted as a brain reaction map, namely an effective routing architecture through which stimulation is propagated toward the reaction-producing configuration.

The model is built on anisotropic branched optimal transport, both in a discrete graph formulation and in a continuous current formulation. In each case, concavity of the flux cost promotes aggregation and branching, yielding routing structures that naturally encode shared propagation pathways. We established basic well-posedness results in the discrete setting and proved existence of minimizers in the continuous setting. We also introduced a hybrid stochastic extension in which each admissible graph induces a linear stochastic neural dynamics, and an additional path-space Kullback--Leibler term measures the minimum control effort required to realize prescribed endpoint distributions on that graph.

The main conceptual contribution of the paper is therefore a shift from control of trajectories on fixed networks to variational inference of propagation architectures. This makes the framework particularly appealing when the primary quantity of interest is not only the cost of a state transition, but the effective routing backbone supporting it. Future work should address finer structural properties of minimizers, stability with respect to perturbations of the data, and quantitative comparisons with empirical structural and functional neuroimaging datasets. Numerical simulations associated with the model we are proposing in this work will appear in {\it Multimodal branched transport infers anatomically aligned brain reaction maps} by the author.



\begin{thebibliography}{99}

\bibitem{AsadzadehEtAl2020}
S.~Asadzadeh, T.~Yousefi Rezaii, S.~Beheshti, A.~Delpak, and S.~Meshgini,
\newblock A systematic review of EEG source localization techniques and their applications on diagnosis of brain abnormalities,
\newblock preprint, 2020.

\bibitem{BazinetHansenMisic2023}
V.~Bazinet, J.~Y.~Hansen, and B.~Misic,
\newblock Towards a biologically annotated brain connectome,
\newblock {\em Nature Reviews Neuroscience}, 24 (2023), no.~12, pp.~747--760,
\newblock doi:10.1038/s41583-023-00752-3.

\bibitem{BernotCasellesMorel2009}
M.~Bernot, V.~Caselles, and J.-M.~Morel,
\newblock {\em Optimal Transportation Networks: Models and Theory},
\newblock Lecture Notes in Mathematics, Vol.~1955, Springer, Berlin, 2009.

\bibitem{Bressloff2012}
P.~C.~Bressloff,
\newblock Spatiotemporal dynamics of continuum neural fields,
\newblock {\em Journal of Physics A: Mathematical and Theoretical}, 45 (2012), 033001.

\bibitem{Breakspear2017}
M.~Breakspear,
\newblock Dynamic models of large-scale brain activity,
\newblock {\em Nature Neuroscience}, 20 (2017), no.~3, pp.~340--352.

\bibitem{DecoKringelbach2020}
G.~Deco and M.~L.~Kringelbach,
\newblock Turbulent-like dynamics in the human brain,
\newblock {\em Cell Reports}, 33 (2020), no.~10, Article 108471.

\bibitem{EstebanEtAl2020}
O.~Esteban, R.~Ciric, K.~Finc, R.~W.~Blair, C.~J.~Markiewicz, C.~A.~Moodie,
J.~D.~Kent, M.~Goncalves, E.~DuPre, D.~E.~P.~Gomez, Z.~Ye, T.~Salo,
R.~Valabregue, I.~K.~Amlien, F.~Liem, N.~Jacoby, H.~Stoji\'c, M.~Cieslak,
S.~Urchs, Y.~O.~Halchenko, S.~S.~Ghosh, A.~De La Vega, T.~Yarkoni,
J.~Wright, W.~H.~Thompson, R.~A.~Poldrack, and K.~J.~Gorgolewski,
\newblock Analysis of task-based functional MRI data preprocessed with fMRIPrep,
\newblock {\em Nature Protocols}, 15 (2020), pp.~2186--2202,
\newblock doi:10.1038/s41596-020-0327-3.

\bibitem{FaugerasInglis2015}
O.~Faugeras and J.~Inglis,
\newblock Stochastic neural field equations: a rigorous footing,
\newblock {\em Journal of Mathematical Biology}, 71 (2015), pp.~259--300.

\bibitem{FotiadisEtAl2024}
P.~Fotiadis, L.~Parkes, K.~A.~Davis, T.~D.~Satterthwaite, R.~T.~Shinohara, and D.~S.~Bassett,
\newblock Structure--function coupling in macroscale human brain networks,
\newblock {\em Nature Reviews Neuroscience}, 2024,
\newblock doi:10.1038/s41583-024-00846-6.

\bibitem{Friston2009}
K.~J.~Friston,
\newblock Modalities, modes, and models in functional neuroimaging,
\newblock {\em Science}, 326 (2009), no.~5951, pp.~399--403,
\newblock doi:10.1126/science.1174521.

\bibitem{GalvesLoecherbachPouzat2024}
A.~Galves, E.~Löcherbach, and C.~Pouzat,
\newblock {\em Probabilistic Spiking Neuronal Nets: Neuromathematics for the Computer Era},
\newblock Lecture Notes on Mathematical Modelling in the Life Sciences,
\newblock Springer, Cham, 2024.
\newblock doi:10.1007/978-3-031-68409-8

\bibitem{Gilbert1967}
E.~N.~Gilbert,
\newblock Minimum cost communication networks,
\newblock {\em Bell System Technical Journal}, 46 (1967), pp.~2209--2227.

\bibitem{Gilson2020}
M.~Gilson, G.~Zamora-L\'opez, V.~Pallar\'es, M.~H.~Adhikari, M.~Senden,
A.~Tauste Campo, D.~Mantini, M.~Corbetta, G.~Deco, and A.~Insabato,
\newblock Model-based whole-brain effective connectivity to study distributed cognition in health and disease,
\newblock {\em Network Neuroscience}, 4 (2020), no.~2, pp.~338--373,

\bibitem{Gu2015}
S.~Gu, F.~Pasqualetti, M.~Cieslak, Q.~K.~Telesford, A.~B.~Yu, A.~E.~Kahn,
J.~D.~Medaglia, J.~M.~Vettel, M.~B.~Miller, S.~T.~Grafton, and D.~S.~Bassett,
\newblock Controllability of structural brain networks,
\newblock {\em Nature Communications}, 6 (2015), Article 8414.

\bibitem{HagmannEtAl2007}
P.~Hagmann, M.~Kurant, X.~Gigandet, P.~Thiran, V.~J.~Wedeen, R.~Meuli,
and J.-P.~Thiran,
\newblock Mapping human whole-brain structural networks with diffusion MRI,
\newblock {\em PLoS ONE}, 2 (2007), no.~7, e597,
\newblock doi:10.1371/journal.pone.0000597.

\bibitem{Kamiya2023}
S.~Kamiya, G.~Kawakita, S.~Sasai, J.~Kitazono, and M.~Oizumi,
\newblock Optimal control costs of brain state transitions in linear stochastic systems,
\newblock {\em Journal of Neuroscience}, 43 (2023), no.~2, pp.~270--281.

\bibitem{Kawakita2022}
G.~Kawakita, S.~Kamiya, S.~Sasai, J.~Kitazono, and M.~Oizumi,
\newblock Quantifying brain state transition cost via Schr\"odinger bridge,
\newblock {\em Network Neuroscience}, 6 (2022), no.~1, pp.~118--134.

\bibitem{KnoscheHaueisen2022}
T.~R.~Kn\"osche and J.~Haueisen,
\newblock {\em EEG/MEG Source Reconstruction: Textbook for Electro- and Magnetoencephalography},
\newblock Springer, Cham, 2022.

\bibitem{KrukowskiMarchese2026}
J.~Krukowski and A.~Marchese,
\newblock On the lower semicontinuity of the Lagrangian $H$-mass,
\newblock preprint, 2026.

\bibitem{KuehnTolle2019}
C.~Kuehn and J.~M.~T\"olle,
\newblock A gradient flow formulation for the stochastic Amari neural field model,
\newblock {\em Journal of Mathematical Biology}, 79 (2019), pp.~1227--1252.

\bibitem{PathakRoyBanerjee2022}
A.~P.~Pathak, D.~R.~Roy, and A.~Banerjee,
\newblock Whole-brain network models: from physics to bedside,
\newblock {\em Frontiers in Computational Neuroscience}, 16 (2022), Article 866517.

\bibitem{Patow2024}
G.~Patow, I.~Martin, Y.~Sanz Perl, M.~L.~Kringelbach, and G.~Deco,
\newblock Whole-brain modelling: an essential tool for understanding brain dynamics,
\newblock {\em Nature Reviews Methods Primers}, 4 (2024), Article 53,
\newblock doi:10.1038/s43586-024-00336-0.

\bibitem{Pegon2017}
P.~Pegon,
\newblock On the Lagrangian branched transport model and the equivalence with its Eulerian formulation,
\newblock in {\em Topological Optimization and Optimal Transport},
\newblock Radon Series on Computational and Applied Mathematics, Vol.~17,
\newblock De Gruyter, Berlin, 2017, pp.~19--46.

\bibitem{Pinotsis2014}
D.~Pinotsis, P.~Robinson, P.~beim Graben, and K.~Friston,
\newblock Neural masses and fields: modeling the dynamics of brain activity,
\newblock {\em Frontiers in Computational Neuroscience}, 8 (2014), Article 149.

\bibitem{Santambrogio2007}
F.~Santambrogio,
\newblock Optimal channel networks, landscape function and branched transport,
\newblock {\em Interfaces and Free Boundaries}, 9 (2007), no.~1, pp.~149--169.

\bibitem{SpekKuznetsovVanGils2020}
L.~Spek, Y.~A.~Kuznetsov, and S.~A.~van Gils,
\newblock Neural field models with transmission delays and diffusion,
\newblock {\em Journal of Mathematical Neuroscience}, 10 (2020), Article 21.

\bibitem{Xia2003}
Q.~Xia,
\newblock Optimal paths related to transport problems,
\newblock {\em Communications in Contemporary Mathematics}, 5 (2003), no.~2, pp.~251--279.

\bibitem{YehEtAl2021}
C.-H.~Yeh, D.~K.~Jones, X.~Liang, M.~Descoteaux, and A.~Connelly,
\newblock Mapping structural connectivity using diffusion MRI: challenges and opportunities,
\newblock {\em Journal of Magnetic Resonance Imaging}, 53 (2021), pp.~1666--1682.

\bibitem{ZhangEtAl2022}
F.~Zhang, A.~Daducci, Y.~He, S.~Schiavi, C.~Seguin, R.~E.~Smith,
C.-H.~Yeh, T.~Zhao, and L.~J.~O'Donnell,
\newblock Quantitative mapping of the brain's structural connectivity using diffusion MRI tractography: a review,
\newblock {\em NeuroImage}, 249 (2022), 118870,
\newblock doi:10.1016/j.neuroimage.2021.118870.

\end{thebibliography}
\end{document}